%
%
%
%
\documentclass[12pt]{amsart}
\usepackage{amssymb}
\usepackage{amsfonts}
\usepackage{amssymb,latexsym}
\usepackage{enumerate}
\usepackage{url}
\usepackage{mathrsfs}
\usepackage{bbm}
\makeatletter
\@namedef{subjclassname@2010}{%
  \textup{2010} Mathematics Subject Classification}
\makeatother

\ProvidesFile{ueuf.fd}
  [2002/01/19 v2.2g %
    AMS font definitions%
  ]
\DeclareFontFamily{U}{euf}{}
\DeclareFontShape{U}{euf}{m}{n}{%
  <5><6><7><8><9>gen*eufm%
  <10><10.95><12><14.4><17.28><20.74><24.88>eufm10%
  }{}
\DeclareFontShape{U}{euf}{b}{n}{%
  <5><6><7><8><9>gen*eufb%
  <10><10.95><12><14.4><17.28><20.74><24.88>eufb10%
  }{}

\ProvidesFile{umsb.fd}
  [2002/01/19 v2.2g %
    AMS font definitions%
  ]
\DeclareFontFamily{U}{msb}{}
\DeclareFontShape{U}{msb}{m}{n}{%
  <5><6><7><8><9>gen*msbm%
  <10><10.95><12><14.4><17.28><20.74><24.88>msbm10%
  }{}

\ProvidesFile{umsa.fd}
  [2002/01/19 v2.2g %
    AMS font definitions%
  ]
\DeclareFontFamily{U}{msa}{}
\DeclareFontShape{U}{msa}{m}{n}{%
  <5><6><7><8><9>gen*msam%
  <10><10.95><12><14.4><17.28><20.74><24.88>msam10%
  }{}

\newtheorem{theorem}{Theorem}[section]
\newtheorem{lemma}[theorem]{Lemma}
\newtheorem{proposition}[theorem]{Proposition}
\newtheorem{corollary}[theorem]{Corollary}

\theoremstyle{definition}

\newtheorem{example}[theorem]{Example}

\newtheorem{remark}[theorem]{Remark}

\numberwithin{equation}{section} \frenchspacing

\textwidth=13.5cm \textheight=23cm
\parindent=16pt
\oddsidemargin=-0.5cm \evensidemargin=-0.5cm \topmargin=-0.5cm


\def\C{\mathcal C}
\def\Cl{{\rm Cl}}

\begin{document}

\title[Euler's integral and multiple cosine functions]
{Euler's integral, multiple cosine function and zeta values}

\author{Su Hu}
\address{Department of Mathematics, South China University of Technology, Guangzhou, Guangdong 510640, China}
\email{mahusu@scut.edu.cn}

\author{Min-Soo Kim}
\address{Department of Mathematics Education, Kyungnam University, Changwon, Gyeongnam 51767, Republic of Korea}
\email{mskim@kyungnam.ac.kr}
\thanks{This work was supported by the National Research Foundation of Korea (NRF) grant funded by the Korea government (MSIT) (No. NRF-2022R1F1A1065551). }

\subjclass[2010]{11M06, 11M35}
\keywords{Euler's integral, Multiple cosine function, Zeta value}

\begin{abstract}
In 1769, Euler proved the following result
$$ \int_0^{\frac\pi2}\log(\sin \theta) d\theta=-\frac\pi2 \log2.$$
In this paper, as a generalization, we evaluate the definite integrals
$$\int_0^x \theta^{r-2}\log\left(\cos\frac\theta2\right)d\theta $$
for $r=2,3,4,\ldots.$ 
We show that it can be expressed by the special values of Kurokawa and Koyama's multiple cosine functions  
$\mathcal{C}_r(x)$ or by the special values of alternating zeta
and Dirichlet lambda functions.

In particular, we get the following explicit expression of the zeta value 
$$\zeta(3)=\frac{4\pi^2}{21}\log\left(\frac{e^{\frac{4G}{\pi}}\mathcal{C}_3\left(\frac14\right)^{16}}{\sqrt2}\right),$$
where $G$ is Catalan's constant and $\mathcal{C}_3\left(\frac14\right)$ is the special value of Kurokawa and Koyama's multiple cosine function
$\mathcal{C}_3(x)$ at $\frac14$. 
Furthermore, we prove several  series representations for the logarithm of 
multiple cosine functions  $\log\mathcal{C}_r\left(\frac x{2}\right)$ by zeta functions, $L$-functions or polylogarithms.
One of them leads to another expression of
 $\zeta(3)$:
$$\zeta(3)=\frac{72\pi^2}{11}\log\left(\frac{3^{\frac1{72}}\C_3\left(\frac16\right)}{\C_2\left(\frac16\right)^{\frac13}}\right).$$
\end{abstract}

\maketitle

\section{Introduction}
\subsection{Zeta functions}
The main purpose of this paper is to relate the Euler type integrals and the multiple cosine functions with the special values of zeta functions.
So in this section, to our purpose, firstly we introduce various types of zeta functions.

For Re$(s)>1,$  the Riemann zeta function is defined by
\begin{equation}~\label{Ri-zeta}
\zeta(s)=\sum_{n=1}^{\infty}\frac{1}{n^{s}}.
\end{equation}
This function can be analytically continued to a meromorphic function  in the
complex plane except for a simple pole, with residue 1, at the point $s=1$.
The special number $\zeta(3)=1.20205\cdots$ is called Ap\'ery constant.
It is named after  Ap\'ery, who proved in 1979 that $\zeta(3)$ is irrational (see \cite{Ape}).
 
For Re$(s)>1$  and  $a\neq0,-1,-2,\ldots,$ in 1882, Hurwitz \cite{Hurwitz} defined the partial zeta function
\begin{equation}~\label{Hurwitz}
\zeta(s,a)=\sum_{n=0}^{\infty}\frac{1}{(n+a)^{s}},
\end{equation}
which generalized (\ref{Ri-zeta}). 
As (\ref{Ri-zeta}), this function can also be analytically continued to a meromorphic function  in the
complex plane except for a simple pole at $s=1$ with residue 1.

The alternating Hurwitz zeta function is defined by
\begin{equation}\label{E-zeta-def}
\zeta_E(s,a)=\sum_{n=0}^\infty\frac{(-1)^n}{(n+a)^{s}},
\end{equation}
where Re$(s)>0$ and  $a\neq0,-1,-2,\ldots$ (see \cite{Ayo}, \cite{Cvj} and \cite{SK2019}).
It can be analytically continued to the complex plane without any pole.
Sometimes we may use the notation $J(s,a)$ instead of $\zeta_E(s,a)$
(see, e.g.,  Williams and Zhang \cite[p. 36, (1.1)]{WZ}).
There exists the following relationship between $\zeta_E(s,a)$ and $\zeta(s,a)$ (see \cite[p. 37, (2.3)]{WZ}):
\begin{equation}\label{E-zeta-HZ}
\zeta_E(s,a)=2^{-s}\left(\zeta\left(s,\frac a2\right)-\zeta\left(s,\frac{a+1}2\right)\right).
\end{equation}
Recently, the Fourier expansion and several integral representations, special values and power series expansions,
convexity properties  of $\zeta_{E}(s,a)$ have been investigated (see \cite{Cvj, HKK, SK2019}), and  it has been found that  $\zeta_{E}(s,a)$ can be used to represent a partial zeta function of cyclotomic fields in one version of Stark's conjectures in algebraic number theory (see \cite[p. 4249, (6.13)]{HK-G}).

In particular setting $a=1,$ the function $\zeta_E(s,a)$ reduces to the alternating zeta function $\zeta_{E}(s)$
(also known as Dirichlet's eta or Euler’s eta function),
\begin{equation}\label{A-zeta-1}
\zeta_E(s)=\sum_{n=1}^\infty\frac{(-1)^{n+1}}{n^{s}}=\eta(s).
\end{equation}
Obviously, \begin{equation}\label{A-zeta}
\zeta_E(s)=(1-2^{1-s})\zeta(s).
\end{equation}
And from the Taylor expansion of $\log(1+x)$, we have
\begin{equation}\label{e-zeta(1)}
\zeta_E(1)=\sum_{n=1}^\infty\frac{(-1)^{n+1}}{n}=\log2
\end{equation}
(see \cite{Ayo}, \cite{Cvj} and \cite{SK2019}).
According to Weil's history~\cite[p.~273--276]{Weil}, the function $\zeta_E(s)$ has been used by Euler to ``prove"
\begin{equation}~\label{fe}
\frac{\zeta_{E}(1-s)}{\zeta_{E}(s)}=-\frac{\Gamma(s)(2^{s}-1)\textrm{cos}(\pi s/2)}{(2^{s-1}-1)\pi^{s}}
\end{equation}
which leads to the functional equation of $\zeta(s)$.
It is  also a particular case of Witten's zeta functions in mathematical physics \cite[p. 248, (3.14)]{Min}, and  it has been studied and evaluated at certain positive integers by Sitaramachandra Rao \cite{SiR} in terms of the Riemann zeta values. See also \cite[p. 31, \S 7]{FS} and \cite[p. 2, (2)]{Mi}.

The Dirichlet lambda function $\lambda(s)$ is defined by
\begin{equation}\label{lam}
\begin{aligned}
\lambda(s)&=\sum_{n=0}^\infty\frac1{(2n+1)^s}\\
&=\frac1{2^s}\zeta\left(s,\frac12\right)=(1-2^{-s})\zeta(s)
\end{aligned}
\end{equation}
for Re$(s)>1$ (see \cite[p. 954, (1.9)]{SK2019}). This function was studied by Euler under the notation $N(s)$ 
(see \cite[p. 70]{Var}).
 Euler also considered its alternating form
\begin{equation}\label{beta-def}
\beta(s)=\sum_{n=0}^\infty\frac{(-1)^{n}}{(2n+1)^s}=\frac1{2^s}\zeta_E\left(s,\frac12\right)
\end{equation} 
for Re$(s)>0$,
which he denoted by $L(s)$ (see \cite[p.~70]{Var}). 
Furthermore, the constant $\beta(2)= G$ is usually named as Catalan's constant 
(see \cite{Ku06}, \cite{RZ}, \cite{SC} and \cite{WG}).
Both functions admit an analytic continuation, $\lambda(s)$ to all $s\neq1$ and $\beta(s)$ to all $s.$
They have been studied in detail by us in \cite{SK2019}, in particular, we have obtained a number of infinite families of 
linear recurrence relations for $\lambda(s)$ at positive even integer arguments $\lambda(2m)$,
convolution identities for special values of $\lambda(s)$ at even arguments and special values
of $\beta(s)$ at odd arguments.

The Dirichlet $L$-function associated to a Dirichlet character $\chi$  is given by
\begin{equation}\label{di-def}
\begin{aligned}
L(s,\chi)&=\sum_{n=1}^\infty\frac{\chi(n)}{n^s}\\
&=\prod_{p}\frac1{1-\chi(p)p^{-s}},
\end{aligned}
\end{equation}
which is convergent for Re$(s)>1$ and the Euler product is taken over all prime numbers $p.$ 
It was introduced by Dirichlet in 1837 to prove the theorem on primes in arithmetic progressions (see \cite[Chapter 7]{Apostol}).
For the trivial Dirichlet character $\mathbbm{1}$ we have $L(s,\mathbbm{1})=\zeta(s).$
For the principal character $\mathbbm{1}_m$ of modulus $m$ induced by $\mathbbm{1}$ we have
\cite[p. 255]{IR}
\begin{equation}\label{prin-L}
\zeta(s)=L(s,\mathbbm{1}_m)\prod_{p\mid m}\frac1{1-p^{-s}}.
\end{equation}
We may also express $L(s,\chi)$ by using the Hurwitz zeta functions. Let $f$ be a positive integer and let $\chi$ be any character modulo $f.$ 
The Dirichlet $L$-function $L(s,\chi)$ is expressed in terms of the Hurwitz zeta function $\zeta(s, a)$ by means of the following formula
\begin{equation}\label{L-H}
L(s,\chi)=f^{-s}\sum_{a=1}^{f-1}\chi(a)\zeta\left(s,\frac af\right)
\end{equation}
for Re$(s)>1,$ and it can be analytic continued to the whole $s$-plane from the above expression.

\subsection{Multiple trigonometric functions and the related integrals} 
Around 1742, Euler successfully calculated the zeta value
\begin{equation}\label{Mengoli}
\zeta(2)=\sum_{n=1}^{\infty}\frac{1}{n^2}=1+\frac{1}{4}+\frac{1}{9}+\cdots
\end{equation}
by considering the integration 
\begin{equation}\label{Euler2}
\frac{1}{2}(\arcsin x)^{2}=\int_{0}^{x}\frac{\arcsin t}{\sqrt{1-t^{2}}}dt.
\end{equation}
 In concrete, by taking $x=1$ in the left-hand side we get $\frac{\pi^{2}}{8}$, and
by expanding $\arcsin t$ as a power series and integrating term-by-term on the right-hand side,  
 we get the sum
 $$\lambda(2)=1+\frac{1}{3^{2}}+\frac{1}{5^{2}}+\cdots,$$
 where $\lambda(s)$ is the Dirichlet lambda function (see (\ref{lam})).
 Then by comparing the results on the both sides we arrive at the summation
 \begin{equation}\label{Euler3}
\lambda(2)=1+\frac{1}{3^{2}}+\frac{1}{5^{2}}+\cdots=\frac{\pi^{2}}{8}.
\end{equation}
Finally, the identity
\begin{equation}\label{Euler4}
1+\frac{1}{3^{2}}+\frac{1}{5^{2}}+\cdots=\zeta(2)-\frac{1}{2^{2}}\zeta(2)=\frac{3}{4}\zeta(2)
\end{equation}
leads to
 \begin{equation}
\zeta(2)=1+\frac{1}{4}+\frac{1}{9}+\cdots=\frac{\pi^2}{6}.
\end{equation}
And more generally, for $n=1,2,3,\ldots,$ Euler obtained
\begin{equation} \label{zeta-even}
\zeta(2n)=1+\frac{1}{2^{2n}}+\frac{1}{3^{2n}}+\cdots=\frac{(-1)^{n-1}B_{2n}2^{2n}}{2(2n)!}\pi^{2n},
\end{equation}
where the $B_{2n}$ are the Bernoulli numbers defined by the generating function
\begin{equation}
\frac{t}{e^{t}-1}=\sum_{n=0}^{\infty}B_{n}\frac{t^n}{n!}.
\end{equation}
(See \cite[p. 266, Theorem 12.17]{Apostol}). 
But the explicit formulas for $\zeta(3)$ and $\zeta(2n+1)$ are still unknown.
For the long-standing history, we refer to a recent book by Nahin \cite{Nahin}.

To extend (\ref{Euler3}) from 2 to 3, Euler got the following formula
\begin{equation}\label{Euler5}
\lambda(3)=1+\frac{1}{3^{3}}+\frac{1}{5^3}+\cdots=\frac{\pi^2}{\log 2}+2\int_0^{\frac{\pi}{2}}\theta\log(\sin \theta) d\theta
\end{equation}
(see \cite[p. 63]{Var}), and in 1769 Euler \cite{Eu} proved the following result 
\begin{equation}\label{Eu-int}
I=\int_0^{\frac\pi2}\log(\sin \theta) d\theta=-\frac\pi2 \log2,
\end{equation}
which is equal  to
\begin{equation}\label{Eu-int-c}
I=\int_0^{\frac\pi2}\log(\cos \theta) d\theta
\end{equation}
(see \cite[p. 152]{KW04}). 

Generalizing  the above integrals (\ref{Euler5}) and (\ref{Eu-int}), for $0\leq x<\pi$ and $r=2,3,4,\ldots,$ Koyama and Kurokawa \cite{KK05} evaluated the definite integrals
\begin{equation}\label{Eu-int-gen}
\int_0^{x}\theta^{r-2}\log(\sin \theta) d\theta
\end{equation}
and showed that (\ref{Eu-int-gen}) is expressed by the multiple sine functions:
\begin{equation}\label{K-K}
\int_0^x \theta^{r-2}\log(\sin\theta)d\theta=\frac{x^{r-1}}{r-1}\log\left(\sin x\right)
-\frac{\pi^{r-1}}{r-1}\log \mathcal S_r\left(\frac x \pi\right)
\end{equation}
(see \cite[Theorem 1]{KK05}).

It is well-known that the definition of sine functions starts from the following infinite product representation
\begin{equation}
\sin x=x \prod_{n=1}^{\infty} \left(1-\frac{x^2}{n^2\pi^{2}}\right)
\end{equation} 
(see \cite[p. 44, 1.431(1)]{GR} and \cite[p. 28, (1.4.9)]{Nahin}).
Denote by 
$$\mathcal S_1(x)=2\sin(\pi x)=2\pi x \prod_{n=1}^\infty\left(1-\frac{x^2}{n^2}\right).$$
In 1886, as a generalization, H\"older \cite{Ho} defined the double sine function $\mathcal S_2(x)$ from the infinite product
\begin{equation}
\mathcal S_2(x)=e^x\prod_{n=1}^\infty\left\{\left(\frac{1-\frac xn}{1+\frac xn}\right)^ne^{2x}\right\}.
\end{equation}
Then in 1990s, generalizing $\mathcal S_2(x),$ Kurokawa \cite{Ku1,Ku2,Ku3} further defined
the multiple sine function $\mathcal S_r(x)$ of order $r=2,3,4,\ldots$ by the Weierstrass product
\begin{equation}
\mathcal S_r(x)=\exp\left(\frac{x^{r-1}}{r-1}\right)\prod_{n=1}^\infty\left\{P_r\left(\frac xn\right)P_r\left(-\frac xn\right)^{(-1)^{r-1}}\right\}^{n^{r-1}},
\end{equation}
where 
\begin{equation}\label{Pr}
P_r(x)=(1-x)\exp\left(x+\frac{x^2}{2}+\cdots+\frac{x^r}{r}\right).
\end{equation}

The cosine function has the following infinite product representation
\begin{equation}\label{cose} 
\cos x=\prod_{n=1, n\text{:odd}}^\infty \left(1-\frac{x^2}{(\frac{n\pi}2)^2} \right)
\end{equation}
(see \cite[p. 45, 1.431(3)]{GR}). Denote by
\begin{equation}\label{cos1-ex}
\C_1(x)=2\cos(\pi x) \\
=2\prod_{n=1, n\text{:odd}}^\infty \left(1-\frac{x^2}{(\frac n2)^2} \right).
\end{equation}
Then in 2003, Kurokawa and Koyama \cite{KK} defined the multiple cosine function $\C_r(x)$ from the Weierstrass product
\begin{equation}\label{mcos-def}
\begin{aligned}
\C_r(x)&=\prod_{n=-\infty, n\text{:odd}}^\infty P_r\left(\frac{x}{\frac n2}\right)^{(\frac n2)^{r-1}} \\
&=\prod_{n=1, n\text{:odd}}^\infty \left\{P_r\left(\frac{x}{\frac n2}\right)P_r\left(-\frac{x}{\frac n2}\right)^{(-1)^{r-1}}\right\}^{(\frac n2)^{r-1}}
\end{aligned}
\end{equation}
for $r=2,3,4,\ldots$ (see also \cite{KW03}, \cite{KW} and \cite{KW04}).

Letting $r=2,3$ and 4 in (\ref{mcos-def}), we get
\begin{equation}\label{mcos-ex}
\begin{aligned}
\C_2(x)&=\prod_{n=1, n\text{:odd}}^\infty \left\{\left(\frac{1-\frac x{(\frac n2)}}{1+\frac x{(\frac n2)}}\right)^{\frac n2}e^{2x}\right\}, 
\\
\C_3(x)&=\prod_{n=1, n\text{:odd}}^\infty \left\{\left(1-\frac{x^2}{(\frac n2)^2}\right)^{(\frac n2)^2}e^{x^2}\right\},
\\
\C_4(x)&=\prod_{n=1, n\text{:odd}}^\infty \left\{\left(\frac{1-\frac x{(\frac n2)}}{1+\frac x{(\frac n2)}}\right)^{(\frac n2)^3}e^{\frac{n^2}2 x+\frac23 x^3}\right\}
\end{aligned}
\end{equation}
(see \cite{KK,KW03,KW04}).
Then the duplication formulas are expressed as
\begin{equation}\label{sin-cos}
\C_r(x)^{2^{r-1}}=\frac{\mathcal S_r(2x)}{\mathcal S_r(x)^{2^{r-1}}}
\end{equation}
for $r\geq1$ (see \cite[p. 848]{KK}, \cite[p. 125]{KW03}, \cite[p. 477]{KW} and \cite[p. 142]{KW04}).
The proof of (\ref{sin-cos}) can be found in \cite[p. 125, \S3]{KW03} and Corollary \ref{dup} below.

\subsection{Our results} 
In this paper, inspiring by Koyama and Kurokawa's work  \cite{KK05},  we evaluate the definite integrals
\begin{equation}\label{mainp}
 \int_0^x \theta^{r-2}\log\left(\cos\frac\theta2\right)d\theta \end{equation}
for $r=2,3,4,\ldots.$ 
We show that (\ref{mainp}) can be expressed by the special values of Kurokawa and Koyama's multiple cosine functions  
$\mathcal{C}_r(x)$ (see Theorem \ref{thm1}) or by the special values of alternating zeta
and Dirichlet lambda functions (see Theorems \ref{thm1-gen} and \ref{thm1-cor}).

In particular, we get the following explicit expression of the zeta value 
\begin{equation}\label{cos-zeta}
\zeta(3)=\frac{4\pi^2}{21}\log\left(\frac{e^{\frac{4G}{\pi}}\C_3\left(\frac14\right)^{16}}{\sqrt2}\right),
\end{equation}
where $G$ is Catalan's constant and $\C_3\left(\frac14\right)$ is the special value of Kurokawa and Koyama's multiple cosine function
$\C_3(x)$ at $\frac14$ (see Corollary \ref{zeta3}). 
As pointed by Allouche in an email to us, the above identity is equivalent to the following formula by Kurokawa and Wakayama (see \cite[p. 123]{KW03}):
$$\C_3\left(\frac14\right)=2^{\frac{1}{32}}\exp\left(\frac{21\zeta(3)}{64\pi^{2}}-\frac{L(2,\chi_{-4})}{4\pi}\right),$$
where  $L(2,\chi_{-4})$ equals to the Catalan constant $G$. Recently,  following (\ref{cos-zeta}), 
Allouche \cite{All} found a link between the Kurokawa multiple trigonometric functions and two functions introduced respectively by Borwein--Dykshoorn \cite{Borwein} and by Adamchik \cite{Adamchik 2005}.

Furthermore, we prove several series representations
 of $\log\C_r\left(\frac x{2}\right)$ by $\lambda(2n)$ for $n=1,2,3,\ldots$ or by $\zeta_E(r)$ for $r=2,3,4,\ldots$  and
 the special values of polylogarithms (see Theorems \ref{integ-2} and  \ref{integ-poly}).
 From Theorem \ref{integ-poly}, we express the special values of $\C_2\left(\frac16\right)$
 and $\C_3\left(\frac16\right)$ by $L(2,\chi_3), L(2,\chi_6),$ the special values of Dirichlet's $L$-functions, 
 and the zeta value $\zeta(3)$ (see Corollary \ref{c(1/6)}). This leads to another expression  of
 $\zeta(3)$:
$$\zeta(3)=\frac{72\pi^2}{11}\log\left(\frac{3^{\frac1{72}}\C_3\left(\frac16\right)}{\C_2\left(\frac16\right)^{\frac13}}\right)$$
 (see Remark \ref{1.6}).
 
\section{Main results}\label{main results}
In this section, we state our main results. Their  proofs will be given in Section \ref{proofs}.
First, we represent (\ref{mainp}) by the special values of multiple cosine functions.

\begin{theorem}\label{thm1}
For $0\leq x<\pi$ and $r=2,3,4,\ldots,$ we have
$$
\int_0^x \theta^{r-2}\log\left(\cos\frac\theta2\right)d\theta=\frac{x^{r-1}}{r-1}\log\left(\cos\frac x2\right)
-\frac{(2\pi)^{r-1}}{r-1}\log\C_r\left(\frac x{2\pi}\right).
$$
\end{theorem}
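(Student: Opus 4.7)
The plan is to reduce Theorem \ref{thm1} to the Koyama--Kurokawa identity (\ref{K-K}) for multiple sine functions by invoking the duplication relation (\ref{sin-cos}) to convert from multiple cosine to multiple sine. Let $F(x)$ denote the right-hand side of the theorem. Taking the logarithm of (\ref{sin-cos}) at the point $x/(2\pi)$ and multiplying through by $\pi^{r-1}/(r-1)$, and using $2^{r-1}\pi^{r-1}=(2\pi)^{r-1}$, yields
$$\frac{(2\pi)^{r-1}}{r-1}\log\C_r\!\left(\frac{x}{2\pi}\right) = \frac{\pi^{r-1}}{r-1}\log\mathcal S_r\!\left(\frac{x}{\pi}\right) - \frac{(2\pi)^{r-1}}{r-1}\log\mathcal S_r\!\left(\frac{x}{2\pi}\right),$$
so $F(x)$ may be rewritten purely in terms of $\mathcal S_r$.

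Next, I would invoke (\ref{K-K}) at two different arguments: first with $x$, which handles the $\log\mathcal S_r(x/\pi)$ term directly; and then with $x/2$, followed by the substitution $\theta=\phi/2$ in the resulting integral, which handles the $\log\mathcal S_r(x/(2\pi))$ term (the Jacobian $1/2^{r-1}$ neatly cancels the extra factor $2^{r-1}$ arising from $(2\pi)^{r-1}$ versus $\pi^{r-1}$). Substituting both expressions back into $F(x)$ produces
$$F(x) = \frac{x^{r-1}}{r-1}\bigl[\log\cos(x/2) - \log\sin x + \log\sin(x/2)\bigr] + \int_0^x\theta^{r-2}\bigl[\log\sin\theta - \log\sin(\theta/2)\bigr]d\theta.$$
The double-angle identity $\sin\theta = 2\sin(\theta/2)\cos(\theta/2)$ collapses the boundary bracket to $-\log 2$ and the integrand bracket to $\log 2 + \log\cos(\theta/2)$; the two $\log 2$ contributions cancel after integrating $\theta^{r-2}\log 2$ against $x$, leaving exactly $\int_0^x\theta^{r-2}\log\cos(\theta/2)\,d\theta$, which is the left-hand side.

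The argument is thus entirely algebraic once the two cited identities are in hand. The main obstacle is bookkeeping: carefully tracking the various powers of $2$ and $\pi$ through the duplication formula, the two applications of (\ref{K-K}), and the change of variable. The hypothesis $0\leq x<\pi$ ensures that $x/2 < \pi/2 < \pi$, so (\ref{K-K}) applies at both arguments $x$ and $x/2$, and the functions $\sin\theta$, $\sin(\theta/2)$, $\cos(\theta/2)$ all remain strictly positive on the relevant intervals, keeping every logarithm real-valued. An alternative route is to differentiate $F(x)$ directly and check $F'(x) = x^{r-2}\log\cos(x/2)$ together with $F(0)=0$ (noting $\C_r(0)=1$); this reduces to the identity $\frac{d}{dy}\log\C_r(y) = -\pi y^{r-1}\tan(\pi y)$, which follows from $(\log P_r)'(u) = -u^r/(1-u)$ combined with the Mittag--Leffler expansion $\tan(\pi y) = (8y/\pi)\sum_{n\text{ odd}}(n^2-4y^2)^{-1}$, but this route requires justifying term-by-term differentiation of the defining Weierstrass product and is therefore less clean than the duplication-based approach above.
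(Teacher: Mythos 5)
Your argument is correct and is essentially the paper's own first proof: both reduce the statement to the Koyama--Kurokawa identity (\ref{K-K}) applied at $x$ and at $x/2$, combined with the logarithm of the duplication formula (\ref{sin-cos}) and the double-angle identity $\sin\theta=2\sin(\theta/2)\cos(\theta/2)$; you merely run the algebra from the right-hand side toward the left rather than the reverse. Your sketched alternative via $\frac{d}{dy}\log\C_r(y)=-\pi y^{r-1}\tan(\pi y)$ is also the paper's second proof (via Propositions \ref{pro1} and \ref{pro2}), so nothing here departs from the published route.
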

Letting $x=\frac{\pi}{2}$ in Theorem \ref{thm1}, we have

\begin{corollary}\label{thm1-co}
For $r=2,3,4,\ldots,$
$$
\int_0^{\frac\pi2} \theta^{r-2}\log\left(\cos\frac\theta2\right)d\theta=-\frac{\pi^{r-1}}{r-1}\left(\frac1{2^r}\log2+
2^{r-1}\log\C_r\left(\frac14\right)\right).
$$
\end{corollary}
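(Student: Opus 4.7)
The plan is to obtain the corollary as a direct specialization of Theorem \ref{thm1}. Since the identity to prove concerns the particular value $x=\pi/2$, I would simply substitute this value into the general identity of Theorem \ref{thm1} and carry out the resulting simplifications on the right-hand side.

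Concretely, after the substitution, the left-hand side becomes the integral appearing in the statement. On the right-hand side, the first term involves $\log\cos(x/2)$ evaluated at $x=\pi/2$, so one uses $\cos(\pi/4)=1/\sqrt{2}$ to rewrite this as $-\frac{1}{2}\log 2$. Combined with the prefactor $\frac{(\pi/2)^{r-1}}{r-1}=\frac{\pi^{r-1}}{(r-1)2^{r-1}}$, this contributes exactly
$$-\frac{\pi^{r-1}}{r-1}\cdot\frac{1}{2^{r}}\log 2.$$
For the second term, the argument $x/(2\pi)$ of the multiple cosine function becomes $1/4$, and the prefactor $(2\pi)^{r-1}=2^{r-1}\pi^{r-1}$ contributes the coefficient $-\frac{\pi^{r-1}}{r-1}\cdot 2^{r-1}\log\C_r(1/4)$.

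Factoring out $-\pi^{r-1}/(r-1)$ from the two terms yields the claimed expression. There is no genuine obstacle: the argument is purely a substitution and algebraic rearrangement, and the only ``computation'' is recognizing $\cos(\pi/4)=2^{-1/2}$ and collecting powers of $2$. The interest of the statement lies in Theorem \ref{thm1} itself; once that is granted, the corollary is immediate.
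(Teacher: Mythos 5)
Your proposal is correct and coincides with the paper's own derivation: the paper obtains the corollary precisely by setting $x=\pi/2$ in Theorem \ref{thm1} and simplifying via $\cos(\pi/4)=2^{-1/2}$. The algebra you carry out (collecting the powers of $2$ and factoring out $-\pi^{r-1}/(r-1)$) matches the stated result exactly.
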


Setting  $r=2,3$ and 4 in Corollary \ref{thm1-co} respectively, and by (\ref{mcos-ex}) with $x=\frac14$ we get the following examples:
$$
\begin{aligned}
\int_0^{\frac\pi2}\log\left(\cos\frac\theta2\right)d\theta
&=\frac\pi2\log\frac1{\sqrt2}-2\pi \log\C_2\left(\frac14\right) \\
&=\frac\pi2\log\frac1{\sqrt2}-\log\left(\prod_{n=1,n{\rm :odd}}^\infty
\left\{\left(\frac{2n-1}{2n+1}\right)^{\frac n2}e^{\frac12}\right\}\right)^{2\pi},
\end{aligned}
$$
$$
\begin{aligned}
\int_0^{\frac\pi2}\theta\log\left(\cos\frac\theta2\right)d\theta&=\frac{\pi^2}8\log\frac1{\sqrt2}- 2\pi^2\log\C_3\left(\frac14\right) \\
&=\frac{\pi^2}8\log\frac1{\sqrt2}-\log\left(\prod_{n=1,n\text{:odd}}^\infty\left(1-\frac{1}{4n^2}\right)^{\frac{n^2}4}e^{\frac1{16}}\right)^{2\pi^2},
\end{aligned}
$$
$$
\begin{aligned}
\int_0^{\frac\pi2}\theta^2\log\left(\cos\frac\theta2\right)d\theta&=\frac{\pi^3}{24}\log\frac1{\sqrt2}- \frac{8\pi^3}3 \log\C_4\left(\frac14\right) \\
&=\frac{\pi^3}{24}\log\frac1{\sqrt2}-\log\left(\prod_{n=1,n\text{:odd}}^\infty\left(\frac{2n-1}{2n+1} \right)^{\frac{n^3}8}e^{\frac{n^2}{8}+\frac1{96}}\right)^{\frac{8\pi^3}3}.
\end{aligned}
$$

In the following, we shall employ the usual convention that an empty sum is taken to be zero. For example, if $n=0,$ then we understand that $\sum_{k=1}^{n}=0.$
We represent  (\ref{mainp}) with $x=\frac\pi2$ by the special values of alternating zeta, lambda and beta functions.

Now we state the following result.

\begin{theorem}\label{thm1-gen}
For $r=2,3,4,\ldots,$
$$
\begin{aligned}
\int_0^{\frac\pi2} \theta^{r-2}\log\left(\cos\frac\theta2\right)d\theta
&=-\frac{\log2}{r-1}\left(\frac\pi2\right)^{r-1}+(r-2)!\sin\left(\frac{r\pi}{2}\right)\zeta_E(r) \\
&\quad+\sum_{k=0}^{\left\lfloor \frac{r-2}{2}\right\rfloor }(-1)^k(2k)!\binom{r-2}{2k}
\left(\frac\pi2\right)^{r-2k-2} \\
&\quad\times\beta(2k+2) \\
&\quad+\sum_{k=1}^{\left\lceil \frac{r-2}2\right\rceil}\frac{(-1)^{k-1}(2k-1)!}{2^{2k+1}}\binom{r-2}{2k-1}
\left(\frac\pi2\right)^{r-2k-1} \\
&\quad\times\zeta_E(2k+1),
\end{aligned}
$$
where $\lfloor x\rfloor =\max\{m\in \mathbb {Z} \mid m\leq x\}$ and $\lceil x\rceil =\min\{m\in \mathbb {Z} \mid m\geq x\}.$
\end{theorem}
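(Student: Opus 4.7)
My plan is to start from the classical Fourier series
$$\log\left(2\cos\frac{\theta}{2}\right)=\sum_{n=1}^{\infty}\frac{(-1)^{n+1}\cos(n\theta)}{n},$$
valid for $\theta\in(-\pi,\pi)$, which converges uniformly on $[0,\pi/2]$ by Dirichlet's test, so it may be integrated term-by-term. Isolating the $-\log 2$ contribution produces
$$\int_0^{\pi/2}\theta^{r-2}\log\cos\frac{\theta}{2}\,d\theta=-\frac{\log 2}{r-1}\left(\frac{\pi}{2}\right)^{r-1}+\sum_{n=1}^{\infty}\frac{(-1)^{n+1}}{n}\int_0^{\pi/2}\theta^{r-2}\cos(n\theta)\,d\theta,$$
which already accounts for the first term in the claimed formula.

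Next I would compute the inner integral $\int_0^{\pi/2}\theta^{r-2}\cos(n\theta)\,d\theta$, most cleanly by writing it as $\operatorname{Re}\int_0^{\pi/2}\theta^{r-2}e^{in\theta}\,d\theta$ and applying the closed form obtained by iterating integration by parts,
$$\int_0^{\pi/2}\theta^{r-2}e^{in\theta}\,d\theta=e^{in\pi/2}\sum_{k=0}^{r-2}\frac{(-1)^{k}(r-2)!}{(r-2-k)!\,(in)^{k+1}}\left(\frac{\pi}{2}\right)^{r-2-k}+\frac{(-1)^{r-1}(r-2)!}{(in)^{r-1}}.$$
Taking real parts, and using the elementary identities $\operatorname{Re}[e^{in\pi/2}(in)^{-(2j+1)}]=(-1)^{j}\sin(n\pi/2)/n^{2j+1}$, $\operatorname{Re}[e^{in\pi/2}(in)^{-2j}]=(-1)^{j}\cos(n\pi/2)/n^{2j}$, together with $(-1)^{r-1}\cos((r-1)\pi/2)=\sin(r\pi/2)$, splits the integral into three packets of terms: those proportional to $\sin(n\pi/2)/n^{2j+1}$ for $j=0,\dots,\lfloor(r-2)/2\rfloor$, those proportional to $\cos(n\pi/2)/n^{2j}$ for $j=1,\dots,\lceil(r-2)/2\rceil$, and a single residual term $(r-2)!\sin(r\pi/2)/n^{r-1}$.

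Multiplying by $(-1)^{n+1}/n$ and summing over $n$, I would then apply the three elementary sum evaluations
$$\sum_{n=1}^{\infty}\frac{(-1)^{n+1}\sin(n\pi/2)}{n^{2j+2}}=\beta(2j+2),\qquad \sum_{n=1}^{\infty}\frac{(-1)^{n+1}\cos(n\pi/2)}{n^{2j+1}}=\frac{\zeta_E(2j+1)}{2^{2j+1}},\qquad \sum_{n=1}^{\infty}\frac{(-1)^{n+1}}{n^r}=\zeta_E(r),$$
each of which follows from (\ref{beta-def}) and (\ref{A-zeta-1}) by splitting the summation according to the parity of $n$. Rewriting the factorial coefficients via $\frac{(r-2)!}{(r-2-2k)!}=(2k)!\binom{r-2}{2k}$ and $\frac{(r-2)!}{(r-1-2k)!}=(2k-1)!\binom{r-2}{2k-1}$ then converts the three packets to exactly the three sums in the statement, with the sign from the $\cos(n\pi/2)$ block absorbed into $(-1)^{k-1}$.

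The main obstacle is bookkeeping: carefully tracking the sign alternations when taking real parts of $(in)^{-(k+1)}e^{in\pi/2}$, handling the parity-dependent upper limits $\lfloor(r-2)/2\rfloor$ and $\lceil(r-2)/2\rceil$, and confirming that the isolated residual term $(-1)^{r-1}(r-2)!/(in)^{r-1}$ indeed carries the coefficient $\sin(r\pi/2)$ so that it vanishes precisely when $r$ is even. Once these parity cases are correctly aligned, identification with the stated formula is mechanical, and a sanity check against the $r=3$ case (which reduces to $\frac{\pi G}{2}-\frac{7}{8}\zeta_E(3)-\frac{\log 2}{2}(\pi/2)^{2}$) can be used to verify the signs.
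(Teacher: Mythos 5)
Your proposal is correct and follows essentially the same route as the paper: the Fourier expansion of $\log\left(2\cos\frac{\theta}{2}\right)$, term-by-term integration, an explicit closed form for $\int_0^{\pi/2}\theta^{r-2}\cos(n\theta)\,d\theta$, and a three-way split of the resulting double sum into the $\beta(2k+2)$, $\zeta_E(2k+1)$ and residual $\zeta_E(r)$ packets. The only cosmetic difference is that you obtain the inner integral by taking real parts of $\int_0^{\pi/2}\theta^{r-2}e^{in\theta}\,d\theta$, whereas the paper quotes the Gradshteyn--Ryzhik antiderivative in terms of $\sin\left(n\theta+\frac{k\pi}{2}\right)$ and then applies the angle-addition formula; the resulting bookkeeping is identical.
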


Combining  Corollary \ref{thm1-co} and Theorem \ref{thm1-gen}, we arrive at the following theorem.

\begin{theorem}\label{thm1-cor}
For $r=2,3,4,\ldots,$
$$
\begin{aligned}
\log\C_r\left(\frac14\right)
&=\frac{\log2}{2^{2r-1}}-\frac{(r-1)!}{(2\pi)^{r-1}}\sin\left(\frac{r\pi}{2}\right)\zeta_E(r) \\
&\quad-\frac{r-1}{2^{2(r-1)}}\sum_{k=0}^{\left\lfloor \frac{r-2}{2}\right\rfloor }(-1)^k(2k)!\binom{r-2}{2k}\left(\frac2\pi\right)^{2k+1}
 \\
&\quad\times\beta(2k+2) \\
&\quad-\frac{r-1}{2^{2r-1}}\sum_{k=1}^{\left\lceil \frac{r-2}2\right\rceil}\frac{(-1)^{k-1}(2k-1)!}{\pi^{2k}}\binom{r-2}{2k-1}
\\
&\quad\times\zeta_E(2k+1).
\end{aligned}
$$
\end{theorem}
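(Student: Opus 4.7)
The plan is to deduce the theorem by combining Corollary \ref{thm1-co} and Theorem \ref{thm1-gen}, eliminating the common integral $I_r:=\int_0^{\pi/2}\theta^{r-2}\log(\cos(\theta/2))d\theta$ between them. Since the Corollary expresses $I_r$ as a linear function of $\log\C_r(1/4)$, and Theorem \ref{thm1-gen} gives an explicit closed form for $I_r$ in terms of special values of $\zeta_E$ and $\beta$, the identity for $\log\C_r(1/4)$ will follow by straightforward algebra.

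First I would rearrange Corollary \ref{thm1-co} to isolate $\log\C_r(1/4)$, yielding
$$\log\C_r\left(\frac14\right) = -\frac{\log 2}{2^{2r-1}} - \frac{r-1}{(2\pi)^{r-1}}I_r,$$
where the constant term comes from $\pi^{r-1}/[(2\pi)^{r-1}\cdot 2^r]=1/2^{2r-1}$. Next I would substitute the closed form from Theorem \ref{thm1-gen} and simplify term by term. The $-\frac{\log 2}{r-1}(\pi/2)^{r-1}$ piece contributes $+\log 2/2^{2r-2}$ after multiplication by $-(r-1)/(2\pi)^{r-1}$, which combines with the standing $-\log 2/2^{2r-1}$ to yield $+\log 2/2^{2r-1}$, matching the leading constant of the target. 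The $\zeta_E(r)$ term becomes $-(r-1)!\sin(r\pi/2)\zeta_E(r)/(2\pi)^{r-1}$ via $(r-1)(r-2)!=(r-1)!$. For the two remaining sums, the ratios
$$\frac{(\pi/2)^{r-2k-2}}{(2\pi)^{r-1}}=\frac{(2/\pi)^{2k+1}}{2^{2(r-1)}}, \qquad \frac{(\pi/2)^{r-2k-1}}{2^{2k+1}(2\pi)^{r-1}}=\frac{1}{2^{2r-1}\pi^{2k}}$$
provide precisely the denominators appearing in the target formula.

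The only obstacle is clerical: keeping track of the exponents of $2$ and $\pi$ accurately across the four contributions, and correctly distributing the factor $-(r-1)/(2\pi)^{r-1}$ through the binomial coefficients and factorials. No new analytic or combinatorial input is required beyond what is already contained in Corollary \ref{thm1-co} and Theorem \ref{thm1-gen}; the proof is a direct algebraic deduction from these two results.
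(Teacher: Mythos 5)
Your proposal is correct and is exactly the paper's route: the paper derives Theorem \ref{thm1-cor} by combining Corollary \ref{thm1-co} with Theorem \ref{thm1-gen}, i.e.\ by solving the corollary for $\log\C_r\left(\frac14\right)$ and substituting the closed form of the integral. All of your intermediate simplifications (the constant $\frac{\log2}{2^{2r-2}}-\frac{\log2}{2^{2r-1}}=\frac{\log2}{2^{2r-1}}$, the factor $(r-1)(r-2)!=(r-1)!$, and the two power-ratio identities) check out.
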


The Catalan constant 
$$
G=\sum_{n=0}^\infty\frac{(-1)^n}{(2n+1)^2}=0.915965594177219015\cdots
$$
is one of famous mysterious constants appearing in many places in mathematics and physics. It can be represented 
by the special values of Hurwitz zeta functions
\begin{equation}\label{beta-zeta-2}
G=\beta(2)=\frac14\zeta_E\left(2,\frac12\right)=\frac1{16}\left(\zeta\left(2,\frac14\right)-\zeta\left(2,\frac34\right)\right)
\end{equation}
(see \cite[p. 667, (1.1)]{Ku06} and \cite[p. 29, (16)]{SC}).

\begin{example}\label{E-int-ex}
From Theorem \ref{thm1-gen} with $r=2,3,4,5$ and (\ref{beta-zeta-2}),
we have the following examples:
$$
\begin{aligned}
\int_0^{\frac\pi2} \theta^0\log\left(\cos\frac\theta2\right)d\theta&=-\frac{\pi\log2}2 +G, \\
\int_0^{\frac\pi2} \theta^1\log\left(\cos\frac\theta2\right)d\theta&=-\frac{\pi^2\log2}{8}+\frac{\pi G}{2}-\frac{7\zeta_E(3)}{8}, \\
\int_0^{\frac\pi2} \theta^2\log\left(\cos\frac\theta2\right)d\theta
&=-\frac{\pi^3\log2}{24}+\frac{\pi^2 G}{4}+\frac{\pi\zeta_E(3)}{12}-2\beta(4), \\
\int_0^{\frac\pi2} \theta^3\log\left(\cos\frac\theta2\right)d\theta
&=-\frac{\pi^4\log2}{64}+\frac{\pi^3 G}{8}+\frac{3\pi^2\zeta_E(3)}{32}-3\pi\beta(4) \\
&\quad+\frac{93\zeta_E(5)}{16}.
\end{aligned}
$$
\end{example}

Setting  $r=2,3,4$ and 5 in Theorem \ref{thm1-cor}, by (\ref{beta-zeta-2}) we get the following corollary.

\begin{corollary}\label{rem-ex}
$$
\begin{aligned}
& \log\C_2\left(\frac14\right)=\frac{\log2}{8}-\frac{G}{2\pi}, \\
&\log\C_3\left(\frac14\right)=\frac{\log2}{32}-\frac{G}{4\pi}+\frac{7\zeta_E(3)}{16\pi^2}, \\
&\log\C_4\left(\frac14\right)=\frac{\log2}{128}-\frac{3G}{32\pi}-\frac{3\zeta_E(3)}{64\pi^2}+\frac{3\beta(4)}{4\pi^3}, \\
&\log\C_5\left(\frac14\right)=\frac{\log2}{512}-\frac{G}{32\pi}-\frac{3\zeta_E(3)}{128\pi^2}
+\frac{3\beta(4)}{4\pi^3}-\frac{93\zeta_E(5)}{64\pi^4}.
\end{aligned}
$$
\end{corollary}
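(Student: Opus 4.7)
The statement is a direct specialization of Theorem \ref{thm1-cor} at $r=2,3,4,5$, combined with the identity $\beta(2)=G$ from \eqref{beta-zeta-2}. No new analytic input is needed; the task is entirely bookkeeping. I would organize the verification by computing, for each $r$, the four ingredients that Theorem \ref{thm1-cor} requires: the scalar $\log 2/2^{2r-1}$, the value of $\sin(r\pi/2)$, the finite $\beta$-sum indexed by $0\leq k\leq\lfloor(r-2)/2\rfloor$, and the finite $\zeta_E$-sum indexed by $1\leq k\leq\lceil(r-2)/2\rceil$. Noting that $\sin(r\pi/2)$ vanishes for even $r$ and equals $-1$ (resp.\ $+1$) for $r=3$ (resp.\ $r=5$), the lone $\zeta_E(r)$ term contributes only when $r$ is odd.

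Concretely, my plan is to tabulate for each $r\in\{2,3,4,5\}$ the index ranges and constants: for $r=2$ both sums are reduced to a single $k=0$ term in the $\beta$-sum and an empty $\zeta_E$-sum; for $r=3$ we get a single $k=0$ term in the $\beta$-sum and a single $k=1$ term in the $\zeta_E$-sum which must be combined with the exceptional $\zeta_E(3)$ coming from $\sin(3\pi/2)=-1$; for $r=4$ the $\beta$-sum has both $k=0$ and $k=1$ while the $\zeta_E$-sum has only $k=1$; and for $r=5$ both sums have two terms, with the exceptional $\zeta_E(5)$ contribution (from $\sin(5\pi/2)=1$) combining with the $k=2$ term of the $\zeta_E$-sum. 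After substitution one replaces $\beta(2)$ by $G$ via \eqref{beta-zeta-2} and collects.

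The main obstacle is simply sign and coefficient bookkeeping: the two prefactors $-\tfrac{r-1}{2^{2(r-1)}}$ and $-\tfrac{r-1}{2^{2r-1}}$, the alternating signs $(-1)^k$ and $(-1)^{k-1}$, the factorials and binomials, and the powers of $2/\pi$ must all align. The only non-mechanical moment is the cancellation for odd $r$: for $r=3$ the exceptional contribution $\zeta_E(3)/(2\pi^2)$ combines with $-\zeta_E(3)/(16\pi^2)$ to give the claimed $7\zeta_E(3)/(16\pi^2)$, and for $r=5$ the exceptional $-3\zeta_E(5)/(2\pi^4)$ combines with $+3\zeta_E(5)/(64\pi^4)$ to give $-93\zeta_E(5)/(64\pi^4)$. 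Once these coincidences are checked, the four displayed identities follow immediately.
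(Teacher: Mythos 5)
Your proposal is correct and coincides with the paper's own derivation, which simply sets $r=2,3,4,5$ in Theorem \ref{thm1-cor} and invokes $\beta(2)=G$ from (\ref{beta-zeta-2}); your explicit bookkeeping (including the $r=3$ combination $\tfrac{1}{2\pi^2}-\tfrac{1}{16\pi^2}=\tfrac{7}{16\pi^2}$ and the $r=5$ combination $-\tfrac{3}{2\pi^4}+\tfrac{3}{64\pi^4}=-\tfrac{93}{64\pi^4}$) checks out.
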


From Corollary \ref{rem-ex} for $r=3$ and (\ref{A-zeta}) we have the following expression for $\zeta(3).$

\begin{corollary}\label{zeta3}
$$
\zeta(3)=\frac{4\pi^2}{21}\log\left(\frac{e^{\frac{4G}{\pi}}\C_3\left(\frac14\right)^{16}}{\sqrt2}\right).
$$
\end{corollary}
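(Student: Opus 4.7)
The plan is to specialize Corollary \ref{rem-ex} to $r=3$ and then convert the resulting identity, which is expressed in terms of the alternating zeta value $\zeta_E(3)$, into a statement about $\zeta(3)$ by invoking the elementary relation (\ref{A-zeta}). There is no new analytic input needed at this stage, since Corollary \ref{rem-ex} already packages all the integral evaluations and special-value computations; the proof is a purely algebraic manipulation of a single linear relation.

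Concretely, I would first write down
\[
\log\C_3\!\left(\tfrac14\right)=\frac{\log 2}{32}-\frac{G}{4\pi}+\frac{7\zeta_E(3)}{16\pi^2}
\]
from the $r=3$ case of Corollary \ref{rem-ex}. Then I would apply (\ref{A-zeta}) at $s=3$ to obtain $\zeta_E(3)=\bigl(1-2^{-2}\bigr)\zeta(3)=\tfrac{3}{4}\zeta(3)$, so that the coefficient of $\zeta(3)$ becomes $\frac{7}{16\pi^2}\cdot\frac{3}{4}=\frac{21}{64\pi^2}$. Solving the resulting equation for $\zeta(3)$ yields
\[
\zeta(3)=\frac{64\pi^2}{21}\log\C_3\!\left(\tfrac14\right)-\frac{2\pi^2}{21}\log 2+\frac{16\pi G}{21},
\]
and factoring out $\frac{4\pi^2}{21}$ gives the three coefficients $16$, $-\tfrac12$, and $\tfrac{4G}{\pi}$ in front of $\log\C_3(1/4)$, $\log 2$, and (after dividing by $\pi^2$) the constant term.

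The final step is cosmetic: combine the three summands under a single logarithm, using $\alpha\log u=\log u^{\alpha}$ and $\log u+\log v=\log(uv)$, to rewrite the bracket as $\log\!\bigl(e^{4G/\pi}\C_3(1/4)^{16}/\sqrt 2\bigr)$. Since the whole argument is linear in $\zeta(3)$ with nonzero coefficient, there is no obstacle of substance; the only place one has to be careful is keeping the signs and powers of $2$ consistent when regrouping the constant terms into the exponential $e^{4G/\pi}$ and the radical $\sqrt 2$.
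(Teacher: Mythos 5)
Your proposal is correct and follows exactly the paper's route: the paper derives Corollary \ref{zeta3} precisely by taking the $r=3$ case of Corollary \ref{rem-ex} and substituting $\zeta_E(3)=\tfrac34\zeta(3)$ from (\ref{A-zeta}). Your algebra (the coefficient $\tfrac{21}{64\pi^2}$, the factoring out of $\tfrac{4\pi^2}{21}$, and the regrouping under a single logarithm) checks out.
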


We also get the following infinite series representation of $\log\C_r\left(\frac x{2\pi}\right)$ by $\lambda(2n)$
for $n=1,2,3,\ldots.$ 

\begin{theorem}\label{integ-2}
For $0\leq x<\pi$ and $r=2,3,4,\ldots,$ we have
$$
\begin{aligned}
\log\C_r\left(\frac x{2\pi}\right)&=\left(\frac x{2\pi}\right)^{r-1}
\left(\log\left(\cos\frac x2\right)+(r-1)\sum_{n=1}^\infty\frac{\lambda(2n)}{n(2n+r-1)}\left(\frac x{\pi}\right)^{2n}\right).
\end{aligned}
$$
\end{theorem}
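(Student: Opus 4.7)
The plan is to rewrite Theorem \ref{thm1} as
\[
\log\C_r\!\left(\frac x{2\pi}\right)=\left(\frac x{2\pi}\right)^{r-1}\log\!\left(\cos\frac x2\right)-\frac{r-1}{(2\pi)^{r-1}}\int_0^x\theta^{r-2}\log\!\left(\cos\frac\theta2\right)d\theta,
\]
and then expand the integrand as a power series in $\theta$ whose coefficients are $\lambda$-values. So the work reduces to proving the ``master expansion''
\[
\log\!\left(\cos\frac{\theta}{2}\right)=-\sum_{n=1}^{\infty}\frac{\lambda(2n)}{n}\left(\frac{\theta}{\pi}\right)^{2n}\qquad(|\theta|<\pi).
\]

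First I would derive this expansion by integrating the classical Taylor series
\[
\tan y=\sum_{n=1}^{\infty}\frac{(-1)^{n-1}\,2^{2n}(2^{2n}-1)B_{2n}}{(2n)!}\,y^{2n-1},\qquad |y|<\tfrac{\pi}{2},
\]
from $0$ to $y$, using $\frac{d}{dy}\log\cos y=-\tan y$, to obtain a Bernoulli-number expansion for $\log\cos y$. Setting $y=\theta/2$ cancels the factor $2^{2n}$ arising from $(\theta/2)^{2n}$. Then the Euler formula (\ref{zeta-even}) gives $(-1)^{n-1}B_{2n}=2(2n)!\,\zeta(2n)/(2\pi)^{2n}$, converting the Bernoulli coefficients into $\zeta(2n)$, and finally the identity $\lambda(2n)=(1-2^{-2n})\zeta(2n)$ from (\ref{lam}) absorbs the factor $(2^{2n}-1)/2^{2n}$ and produces exactly $\lambda(2n)/n$.

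Next I would multiply the master expansion by $\theta^{r-2}$ and integrate termwise from $0$ to $x$. For $0\le x<\pi$ the series converges uniformly on $[0,x]$ (being dominated by a geometric-type series since $\lambda(2n)\to 1$), which justifies interchanging sum and integral. Since $\int_0^x\theta^{2n+r-2}d\theta=\frac{x^{2n+r-1}}{2n+r-1}$, this yields
\[
\int_0^x\theta^{r-2}\log\!\left(\cos\frac\theta2\right)d\theta=-x^{r-1}\sum_{n=1}^{\infty}\frac{\lambda(2n)}{n(2n+r-1)}\left(\frac{x}{\pi}\right)^{2n}.
\]
Substituting this into the rewritten form of Theorem \ref{thm1} and factoring out $(x/(2\pi))^{r-1}$ gives the claimed identity directly.

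The only genuine obstacle is the derivation of the master expansion, which is a short but delicate bookkeeping exercise: the three rewrites $\tan y\to\log\cos y\to \zeta(2n)\to\lambda(2n)$ have to line up so that the powers of $2$ and the factor $(2^{2n}-1)$ telescope cleanly. Everything after that is routine: convergence, term-by-term integration, and algebraic substitution into Theorem \ref{thm1}.
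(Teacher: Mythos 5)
Your proposal is correct and follows the same skeleton as the paper's proof: both reduce the theorem to the expansion $\log\cos\frac{\theta}{2}=-\sum_{n\ge1}\frac{\lambda(2n)}{n}\left(\frac{\theta}{\pi}\right)^{2n}$, integrate it term by term against $\theta^{r-2}$, and combine the result with Theorem \ref{thm1}. The only point of divergence is how that expansion is obtained: the paper takes the logarithm of Euler's infinite product $\cos\theta=\prod_{m\ge0}\bigl(1-\frac{4\theta^2}{(2m+1)^2\pi^2}\bigr)$ and expands each $\log(1-u)$, so that $\lambda(2n)=\sum_m(2m+1)^{-2n}$ appears directly, whereas you integrate the Bernoulli-number series for $\tan y$ and then convert via Euler's formula (\ref{zeta-even}) and $\lambda(s)=(1-2^{-s})\zeta(s)$; your bookkeeping checks out and the two derivations are equivalent, with the product route being slightly more direct and yours requiring no knowledge of (\ref{zeta-even}) beyond what the paper already records.
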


Setting $x=\frac\pi2$ in Theorem \ref{integ-2}, then we have
\begin{corollary}\label{integ-2-cor}
For $r=2,3,4,\ldots,$
$$
\begin{aligned}
\log\C_r\left(\frac14\right)&=\left(\frac14\right)^{r-1}
\left(-\frac12\log2+(r-1)\sum_{n=1}^\infty\frac{\lambda(2n)}{n(2n+r-1)2^{2n}}\right).
\end{aligned}
$$
\end{corollary}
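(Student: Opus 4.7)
The plan is to deduce this corollary as an immediate specialization of Theorem \ref{integ-2}. First I would note that $x=\pi/2$ lies inside the admissible range $0\le x<\pi$, so Theorem \ref{integ-2} applies without modification. Substituting $x=\pi/2$ gives $x/(2\pi)=1/4$ and $(x/(2\pi))^{r-1}=(1/4)^{r-1}$ on the outside, and on the inside we have $(x/\pi)^{2n}=(1/2)^{2n}=1/2^{2n}$ in each term of the series.

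Next I would evaluate the boundary term $\log\cos(x/2)$ at $x=\pi/2$. Since $\cos(\pi/4)=1/\sqrt{2}$, one obtains $\log\cos(\pi/4)=-\tfrac12\log2$, which produces exactly the leading constant inside the parentheses in the stated corollary.

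Plugging these simplifications into the formula of Theorem \ref{integ-2} yields
\[
\log\C_r\!\left(\tfrac14\right)=\left(\tfrac14\right)^{r-1}\!\left(-\tfrac12\log2+(r-1)\sum_{n=1}^\infty\frac{\lambda(2n)}{n(2n+r-1)\,2^{2n}}\right),
\]
which is the claimed identity. I do not anticipate any genuine obstacle: the only thing worth checking is that the series on the right converges at $x=\pi/2$, and this is clear because $\lambda(2n)\to 1$ as $n\to\infty$ while the factor $1/(n(2n+r-1)2^{2n})$ decays geometrically. Thus the corollary follows by pure substitution, with no extra analytic work needed beyond what is already absorbed into Theorem \ref{integ-2}.
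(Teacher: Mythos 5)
Your proof is correct and matches the paper exactly: the paper also obtains this corollary by simply setting $x=\pi/2$ in Theorem \ref{integ-2}, using $\cos(\pi/4)=1/\sqrt{2}$ and $(x/\pi)^{2n}=2^{-2n}$. No further comment is needed.
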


This corollary gives the following examples.

\begin{example}
Setting $r=2,$ we get
$$
\log\C_2\left(\frac14\right)=\frac14
\left(-\frac12\log2+\sum_{n=1}^\infty\frac{\lambda(2n)}{n(2n+1)2^{2n}}\right).
$$
Moreover, by Corollary \ref{rem-ex} we have
$$\log\C_2\left(\frac14\right)=\frac18\log2-\frac{G}{2\pi}.$$
Therefore \cite[p. 244, (694)]{SC}
$$\sum_{n=1}^\infty\frac{\lambda(2n)}{n(2n+1)2^{2n}}
=\sum_{n=1}^\infty\frac{(2^{2n}-1)\zeta(2n)}{n(2n+1)2^{4n}}=\log2-\frac{2G}\pi,$$
where we have used (\ref{lam}).
Similarly, for $r=3,4$ and 5, we find that
$$
\begin{aligned}
\sum_{n=1}^\infty\frac{\lambda(2n)}{n(2n+2)2^{2n}}&=\frac12\log2-\frac{2G}\pi+\frac{7\zeta_E(3)}{2\pi^2}, \\
\sum_{n=1}^\infty\frac{\lambda(2n)}{n(2n+3)2^{2n}}&=\frac13\log2-\frac{2G}\pi-\frac{\zeta_E(3)}{\pi^2}
+\frac{16\beta(4)}{\pi^3}, \\
\sum_{n=1}^\infty\frac{\lambda(2n)}{n(2n+4)2^{2n}}&=\frac1{4}\log2-\frac{2G}{\pi}-\frac{3\zeta_E(3)}{2\pi^2}
-\frac{93\zeta_E(5)}{\pi^4} \\
&\quad+\frac{48\beta(4)}{\pi^3}.
\end{aligned}
$$
\end{example}

Finally, we state series representations of $\log\C_r\left(\frac x{2}\right)$ by $\zeta_E(r)$ and the special values of polylogarithms $\textrm{Li}_k(x)$.

Recall that the polylogarithm function $\textrm{Li}_k(x)$ is defined by
\begin{equation}\label{poly-def}
\textrm{Li}_k(x)=\sum_{n=1}^\infty\frac{x^n}{n^k},
\end{equation}
where $|x|<1$ and $k=1,2,3,\ldots$ (see \cite{Ku3} and \cite{KK}).

\begin{theorem}\label{integ-poly}
Let $r=2,3,4,\ldots.$ Then
\begin{enumerate}
\item[\rm(1)]  We have
$$
\begin{aligned}
\log\C_r\left(\frac x{2}\right)&=-\frac{(r-1)!}{(2\pi i)^{r-1}}\sum_{k=0}^{r-1}\frac{(\pi i)^k}{k!}{\rm Li}_{r-k}(-e^{-\pi ix})x^k
+\frac{\pi i}{r}\left(\frac x2\right)^{r} \\
&\quad-\frac{(r-1)!}{(2\pi i)^{r-1}}\zeta_E(r)
\end{aligned}
$$
for ${\rm Im}(x)<0.$

\item[\rm(2)]  We have
$$
\begin{aligned}
\log\C_r\left(\frac x{2}\right)&=-\frac{(r-1)!}{(-2\pi i)^{r-1}}\sum_{k=0}^{r-1}\frac{(-\pi i)^k}{k!}{\rm Li}_{r-k}(-e^{\pi ix})x^k
-\frac{\pi i}{r}\left(\frac x2\right)^{r} \\
&\quad-\frac{(r-1)!}{(-2\pi i)^{r-1}}\zeta_E(r)
\end{aligned}
$$
for ${\rm Im}(x)>0.$
\item[\rm(3)]  For $2\leq r\in2\mathbb Z$ and $0\leq x<1,$ we have
$$
\begin{aligned}
\C_r\left(\frac x{2}\right)&=\left(2\cos\frac{\pi x}{2}\right)^{\left(\frac x2\right)^{r-1}} \\
&\times\exp\biggl((-1)^{\frac{r}{2}}\frac{(r-1)!}{(2\pi)^{r-1}}\sum_{\substack{1\leq k\leq r-3 \\ k\,:{\rm \,odd}}}
\frac{(-1)^{\frac{k-1}2}(\pi x)^k}{k!}\sum_{n=1}^\infty \frac{(-1)^n\cos(\pi nx)}{n^{r-k}} \\
&\quad\qquad-(-1)^{\frac{r}{2}}\frac{(r-1)!}{(2\pi)^{r-1}}\sum_{\substack{0\leq k\leq r-2 \\ k\,:{\rm \,even}}}
\frac{(-1)^{\frac{k}2}(\pi x)^k}{k!}\sum_{n=1}^\infty \frac{(-1)^n\sin(\pi nx)}{n^{r-k}} \biggl).
\end{aligned}
$$

\item[\rm(4)]  For $3\leq r\in1+2\mathbb Z$ and $0\leq x<1,$ we have
$$
\begin{aligned}
\C_r\left(\frac x{2}\right)&=\left(2\cos\frac{\pi x}{2}\right)^{\left(\frac x2\right)^{r-1}} \\
&\times\exp\biggl(-(-1)^{\frac{r-1}{2}}\frac{(r-1)!}{(2\pi)^{r-1}}\sum_{\substack{0\leq k\leq r-3 \\ k\,:{\rm \,even}}}
\frac{(-1)^{\frac k2}(\pi x)^k}{k!}\sum_{n=1}^\infty \frac{(-1)^n\cos(\pi nx)}{n^{r-k}} \\
&\quad\qquad-(-1)^{\frac{r-1}{2}}\frac{(r-1)!}{(2\pi)^{r-1}}\sum_{\substack{1\leq k\leq r-2 \\ k\,:{\rm \,odd}}}
\frac{(-1)^{\frac{k-1}2}(\pi x)^k}{k!}\sum_{n=1}^\infty \frac{(-1)^n\sin(\pi nx)}{n^{r-k}} \\
&\quad\qquad -(-1)^{\frac{r-1}2}\frac{(r-1)!}{(2\pi)^{r-1}}\zeta_E(r)\biggl).
\end{aligned}
$$
\end{enumerate}
\end{theorem}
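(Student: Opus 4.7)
The plan is to reduce everything to Theorem~\ref{thm1}. Substituting $x \mapsto \pi x$ in that theorem yields
$$\log\C_r\!\left(\frac x2\right) = \left(\frac x2\right)^{r-1}\log\cos\!\left(\frac{\pi x}2\right) - \frac{r-1}{(2\pi)^{r-1}}\int_0^{\pi x}\theta^{r-2}\log\cos\!\left(\frac\theta 2\right)d\theta,$$
so the task becomes the evaluation of this integral. The bridge to polylogarithms is the factorization $\cos(\theta/2) = e^{i\theta/2}(1+e^{-i\theta})/2$, whence
$$\log\cos(\theta/2) = \tfrac{i\theta}2 - \log 2 - {\rm Li}_1(-e^{-i\theta}).$$
This is valid and analytic when $|e^{-i\theta}|<1$, i.e.\ ${\rm Im}(\theta)<0$, matching the hypothesis of~(1); the conjugate factorization handles~(2).

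Inserting this expansion into the integral and splitting off the polynomial pieces (which integrate at once) leaves the polylogarithmic piece $-\int_0^{\pi x}\theta^{r-2}{\rm Li}_1(-e^{-i\theta})d\theta$. The antiderivative rule $\int{\rm Li}_k(-e^{-i\theta})d\theta = i\,{\rm Li}_{k+1}(-e^{-i\theta}) + C$ allows $r-2$ successive integrations by parts, producing the antiderivative
$$J_0(\theta) = i(r-2)!\sum_{k=0}^{r-2}\frac{(-i)^k\,\theta^{r-2-k}}{(r-2-k)!}\,{\rm Li}_{k+2}(-e^{-i\theta}).$$
Evaluation at $\theta=0$ retains only the $k=r-2$ term and uses ${\rm Li}_r(-1) = -\zeta_E(r)$, producing the standalone $\zeta_E(r)$ constant. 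Reassembling, the two $\log 2$ contributions cancel, the leftover imaginary monomials telescope to $\tfrac{\pi i}{r}(x/2)^r$, and the contribution $-(x/2)^{r-1}{\rm Li}_1(-e^{-i\pi x})$ inherited from $\log\cos(\pi x/2)$ is precisely the $k = r-1$ term that extends the theorem's sum to its full range after the reindexing $j = r-2-k$. Part~(2) follows by the symmetric computation with $e^{-i\theta}$ replaced by $e^{i\theta}$.

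For parts~(3) and~(4), where $0 \le x < 1$ is real, average (1) and (2); both sides extend continuously to the real axis. Splitting the polylogarithms via
$${\rm Li}_s(-e^{-i\pi x})\pm{\rm Li}_s(-e^{i\pi x}) = \begin{cases}\phantom{-}2\sum_{n\ge 1}\frac{(-1)^n\cos(\pi nx)}{n^s} & (+),\\ -2i\sum_{n\ge 1}\frac{(-1)^n\sin(\pi nx)}{n^s} & (-),\end{cases}$$
the sign $(-1)^{r-1+k}$ arising from $(-2\pi i)^{1-r}$ versus $(2\pi i)^{1-r}$ selects cosine or sine according to the parity of $k$: for $r$ even (part~(3)) odd $k$ gives cosine and even $k$ gives sine, and for $r$ odd (part~(4)) the roles reverse. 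The $\zeta_E(r)$ contributions cancel when $r$ is even and double when $r$ is odd, explaining the standalone $\zeta_E(r)$ in~(4) and its absence in~(3). Finally, the $k=r-1$ term isolates via ${\rm Li}_1(-e^{-i\pi x})+{\rm Li}_1(-e^{i\pi x}) = -2\log(2\cos(\pi x/2))$ (a consequence of $|1+e^{i\pi x}|^2 = 4\cos^2(\pi x/2)$) and is exponentiated to give the explicit prefactor $(2\cos(\pi x/2))^{(x/2)^{r-1}}$; the remaining terms $0 \le k \le r-2$ deliver the two claimed sums.

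The main obstacle is purely combinatorial bookkeeping: propagating the powers of $i$ and $(-1)$ through the $r-2$ iterated integrations by parts, then reindexing and parity-splitting to match the coefficients $(\pi i)^k/k!$ in~(1)--(2) and the real coefficients $(-1)^{(k\pm 1)/2}$ in~(3)--(4). Once the signs are tracked correctly, the $\log 2$ cancellation, the telescoping to $\tfrac{\pi i}{r}(x/2)^r$, and the extraction of the $(2\cos(\pi x/2))^{(x/2)^{r-1}}$ prefactor all fall out without further work.
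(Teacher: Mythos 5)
Your proposal is correct and follows essentially the same route as the paper: the paper works from Proposition~\ref{pro2}, expands $\tan(\pi x\theta/2)$ as a geometric series in $e^{-\pi inx\theta}$ and integrates termwise via a closed-form exponential-moment integral, whereas you integrate by parts first (i.e.\ start from Theorem~\ref{thm1}), write $\log\cos(\theta/2)=\tfrac{i\theta}{2}-\log 2-{\rm Li}_1(-e^{-i\theta})$ and iterate the antiderivative ladder $\int{\rm Li}_k(-e^{-i\theta})\,d\theta=i\,{\rm Li}_{k+1}(-e^{-i\theta})+C$ --- the same computation carried out in a different order. Parts (2)--(4) are then obtained exactly as in the paper (conjugation, followed by averaging/taking real parts, with the $k=r-1$ term supplying the $\bigl(2\cos(\pi x/2)\bigr)^{(x/2)^{r-1}}$ prefactor and the parity of $r-1+k$ selecting cosine versus sine).
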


\begin{remark}
The analogue results  for multiple sine function have been proved by  Kurokawa-Koyama \cite[p. 849, Theorem 2.8]{KK}
(also see  Kurokawa \cite[p. 222, Theorem 2]{Ku3}). 
\end{remark}

For $r=2,3,4$ and 5, Theorem \ref{integ-poly} implies the following results.

\begin{corollary}\label{integ-poly-ex}
For $0\leq x<1,$ we have
$$
\begin{aligned}
\C_2\left(\frac x2\right)&=\left(2\cos\frac{\pi x}{2}\right)^{\frac x2}
\exp\left(\frac1{2\pi}\sum_{n=1}^\infty \frac{(-1)^n\sin(\pi nx)}{n^{2}}\right), \\
\C_3\left(\frac x2\right)&=\left(2\cos\frac{\pi x}{2}\right)^{\left(\frac x2\right)^2}
\exp\biggl(\frac1{2\pi^2}\sum_{n=1}^\infty \frac{(-1)^n\cos(\pi nx)}{n^{3}} \\
&\quad +\frac x{2\pi}
\sum_{n=1}^\infty \frac{(-1)^n\sin(\pi nx)}{n^{2}}+\frac1{2\pi^2}\zeta_E(3)\biggl),
\end{aligned}
$$
$$
\begin{aligned}
\C_4\left(\frac x2\right)&=\left(2\cos\frac{\pi x}{2}\right)^{\left(\frac x2\right)^3}
\exp\biggl(\frac{3x}{4\pi^2}\sum_{n=1}^\infty \frac{(-1)^n\cos(\pi nx)}{n^{3}} \\
&\quad -\frac 3{4\pi^3}\sum_{n=1}^\infty \frac{(-1)^n\sin(\pi nx)}{n^{4}}
+\frac{3x^2}{8\pi}\sum_{n=1}^\infty \frac{(-1)^n\sin(\pi nx)}{n^{2}}\biggl), \\
\C_5\left(\frac x2\right)&=\left(2\cos\frac{\pi x}{2}\right)^{\left(\frac x2\right)^4}
\exp\biggl(-\frac{3}{2\pi^4}\sum_{n=1}^\infty \frac{(-1)^n\cos(\pi nx)}{n^{5}} \\
&\quad +\frac{3x^2}{4\pi^2}\sum_{n=1}^\infty \frac{(-1)^n\cos(\pi nx)}{n^{3}}
-\frac{3x}{2\pi^3}\sum_{n=1}^\infty \frac{(-1)^n\sin(\pi nx)}{n^{4}} 
\\
&\quad+\frac{x^3}{4\pi}\sum_{n=1}^\infty \frac{(-1)^n\sin(\pi nx)}{n^{2}}
-\frac3{2\pi^4}\zeta_E(5) \biggl).
\end{aligned}
$$
\end{corollary}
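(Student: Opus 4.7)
The corollary is a direct specialization of Theorem \ref{integ-poly}: since that theorem treats even $r$ in part (3) and odd $r$ in part (4), the plan is to apply part (3) for $r=2,4$ and part (4) for $r=3,5$, substitute, and collect terms. No new ideas are required, only careful bookkeeping of the two finite $k$-sums and the three independent sign sources in each formula.

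For $r=2$ (part (3)) the odd-$k$ sum is empty (its upper limit $r-3=-1$ is negative) and the even-$k$ sum reduces to the single term $k=0$, producing one $\sin$-series with prefactor $-(-1)^{r/2}\cdot(r-1)!/(2\pi)^{r-1}=1/(2\pi)$. For $r=3$ (part (4)) the even-$k$ sum yields only $k=0$ (a $\cos$-series at $n^{-3}$) and the odd-$k$ sum only $k=1$ (a $\sin$-series at $n^{-2}$ with weight $\pi x$), together with the $\zeta_E(3)$ contribution; the global sign $-(-1)^{(r-1)/2}=+1$ and the factor $(r-1)!/(2\pi)^{r-1}=1/(2\pi^2)$ then reproduce the stated identity. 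For $r=4$ (part (3)) the odd-$k$ sum picks up $k=1$ ($\cos$-series at $n^{-3}$, weight $\pi x$) while the even-$k$ sum picks up $k=0$ ($\sin$, $n^{-4}$) and $k=2$ ($\sin$, $n^{-2}$, weight $-(\pi x)^2/2$), with overall prefactor $3/(4\pi^3)$ and global sign $+1$. For $r=5$ (part (4)) the even-$k$ sum ranges over $k=0,2$ (contributing $\cos$-series at $n^{-5}$ and $n^{-3}$), the odd-$k$ sum over $k=1,3$ (contributing $\sin$-series at $n^{-4}$ and $n^{-2}$), with an extra $\zeta_E(5)$ term, overall prefactor $3/(2\pi^4)$, and global sign $-1$.

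The only real subtlety is tracking the interplay of three independent sign sources: the global $\pm(-1)^{\lfloor r/2\rfloor}$, the internal $(-1)^{k/2}$ or $(-1)^{(k-1)/2}$ on each summand, and the minus sign separating the $\cos$-series from the $\sin$-series in each part of the theorem. I would organize the contributions for each $r$ in a short side-by-side table to avoid cancellation errors, after which the factors $\pi^k x^k/k!$ absorb cleanly against $1/(2\pi)^{r-1}$ and each of the four displayed identities falls out by inspection, with no genuine obstacle beyond arithmetic vigilance.
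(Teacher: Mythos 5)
Your proposal is correct and is exactly the paper's route: the paper states the corollary as a direct specialization of Theorem \ref{integ-poly} (parts (3) for $r=2,4$ and (4) for $r=3,5$) without further detail, and your bookkeeping of the empty/singleton $k$-sums, the prefactors $(r-1)!/(2\pi)^{r-1}$, and the three sign sources checks out in all four cases.
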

\begin{remark}
In particular, setting $x=\frac12$ in the above relations  and  by using the expansions
$$\sum_{n=1}^\infty \frac{(-1)^n\sin\left(\frac{\pi n}2\right)}{n^{s}}
=-\sum_{n=0}^\infty\frac{(-1)^n}{(2n+1)^s}=-\beta(s)$$
and
$$\sum_{n=1}^\infty \frac{(-1)^n\cos\left(\frac{\pi n}2\right)}{n^{s}}=\sum_{n=1}^\infty\frac{(-1)^n}{(2n)^s}
=-\frac1{2^s}\zeta_E(s),$$
we recover Corollary \ref{rem-ex}.
\end{remark}

By (\ref{lam}), (\ref{di-def}) and (\ref{prin-L}), the Dirichlet series with coefficients $(-1)^n\sin\left(\frac{\pi n}3\right)$ and 
$(-1)^n\cos\left(\frac{\pi n}3\right)$ can be
calculated as follows:
\begin{equation}\label{di-3-sin}
\begin{aligned}
\sum_{n=1}^\infty \frac{(-1)^n\sin\left(\frac{\pi n}3\right)}{n^{s}}
&=\frac{\sqrt3}{2}\left(\sum_{n\equiv1,2\!\!\!\!\!\pmod{6}}\frac{(-1)^n}{n^s} 
-\sum_{n\equiv4,5\!\!\!\!\!\pmod{6}}\frac{(-1)^n}{n^s} \right) \\
&=\frac{\sqrt3}{2}\left(-\left(\sum_{n\equiv1\!\!\!\!\!\pmod{6}}\frac{1}{n^s} 
-\sum_{n\equiv 5\!\!\!\!\!\pmod{6}}\frac{1}{n^s}  \right)\right. \\
&\qquad\qquad+\left.\left( \sum_{n\equiv 2\!\!\!\!\!\pmod{6}}\frac{1}{n^s} -\sum_{n\equiv 4\!\!\!\!\!\pmod{6}}\frac{1}{n^s}
\right)  \right) \\
&=\frac{\sqrt3}{2}\left(-L(s,\chi_6)+ \frac1{2^s}L(s,\chi_3)\right)
\end{aligned}
\end{equation}
and
\begin{equation}\label{di-3-cos}
\begin{aligned}
\sum_{n=1}^\infty \frac{(-1)^n\cos\left(\frac{\pi n}3\right)}{n^{s}}
&=\frac{1}{2}\left(\sum_{n\equiv1,5\!\!\!\!\!\pmod{6}}\frac{(-1)^n}{n^s} 
-\sum_{n\equiv2,4\!\!\!\!\!\pmod{6}}\frac{(-1)^n}{n^s} \right) \\
&\qquad\qquad+ \sum_{n\equiv 0\!\!\!\!\!\pmod{6}}\frac{(-1)^n}{n^s} -\sum_{n\equiv 3\!\!\!\!\!\pmod{6}}\frac{(-1)^n}{n^s} \\
&=-\frac{1}{2}\sum_{n\equiv1,2,4,5\!\!\!\!\!\pmod{6}}\frac{1}{n^s} 
+ \sum_{n\equiv 0\!\!\!\!\!\pmod{6}}\frac{1}{n^s} +\sum_{n\equiv 3\!\!\!\!\!\pmod{6}}\frac{1}{n^s} \\
&=-\frac12 L(s,\mathbbm{1}_3)+\frac1{6^s}\zeta(s)+\frac1{3^s}\lambda(s) \\
&=-\frac12\left(1-\frac1{3^s}\right)\zeta(s)+\frac1{6^s}\zeta(s)+\frac1{3^s}\lambda(s) \\
&=\frac12\left(\frac1{3^{s-1}}-1\right)\zeta(s).
\end{aligned}
\end{equation}

Now, setting $x=\frac13$ in Corollary \ref{integ-poly-ex}, by
(\ref{di-3-sin}) and (\ref{di-3-cos}) we get the following corollary.

\begin{corollary}\label{c(1/6)}
$$
\begin{aligned}
&\C_2\left(\frac16\right)=3^{\frac1{12}}\exp\left(\frac{\sqrt3}{4\pi}\left(\frac14L(2,\chi_3)-L(2,\chi_6)\right)\right), \\
&\C_3\left(\frac16\right)=3^{\frac1{72}}\exp\left(\frac{11}{72\pi^2}\zeta(3)
+\frac{\sqrt3}{12\pi}\left(\frac14L(2,\chi_3)-L(2,\chi_6)\right)\right).
\end{aligned}
$$
\end{corollary}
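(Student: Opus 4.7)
The plan is to specialize Corollary \ref{integ-poly-ex} at $x = 1/3$ (so that $x/2 = 1/6$) for both $r = 2$ and $r = 3$, and then evaluate the Dirichlet-type trigonometric series that appear in the exponents via the identities (\ref{di-3-sin}) and (\ref{di-3-cos}). The prefactor in both cases comes from $2\cos(\pi/6) = \sqrt{3}$: the base $(2\cos(\pi x/2))^{(x/2)^{r-1}}$ becomes $\sqrt{3}^{1/6} = 3^{1/12}$ when $r = 2$, and $\sqrt{3}^{1/36} = 3^{1/72}$ when $r = 3$, which already accounts for the leading factors in the two claimed formulas.

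For $r = 2$, the only nontrivial term in the exponent of $\C_2(1/6)$ is $\frac{1}{2\pi}\sum_{n \geq 1}(-1)^n \sin(\pi n/3)/n^2$. Applying (\ref{di-3-sin}) at $s = 2$ gives this sum as $\frac{\sqrt{3}}{2}\bigl(\tfrac{1}{4}L(2,\chi_3) - L(2,\chi_6)\bigr)$, so the exponent becomes $\frac{\sqrt{3}}{4\pi}\bigl(\tfrac{1}{4}L(2,\chi_3) - L(2,\chi_6)\bigr)$, matching the first line of the corollary exactly.

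For $r = 3$, the exponent splits into three pieces. The sine piece $\frac{1}{6\pi}\sum_{n \geq 1}(-1)^n \sin(\pi n/3)/n^2$, evaluated by the same instance of (\ref{di-3-sin}) at $s = 2$, contributes $\frac{\sqrt{3}}{12\pi}\bigl(\tfrac{1}{4}L(2,\chi_3) - L(2,\chi_6)\bigr)$. The cosine piece and the $\zeta_E(3)$ piece are to be combined into a clean multiple of $\zeta(3)$: formula (\ref{di-3-cos}) at $s = 3$ gives $\sum_{n \geq 1}(-1)^n \cos(\pi n/3)/n^3 = \tfrac{1}{2}(\tfrac{1}{9} - 1)\zeta(3) = -\tfrac{4}{9}\zeta(3)$, and (\ref{A-zeta}) yields $\zeta_E(3) = \tfrac{3}{4}\zeta(3)$, so that
\[
\frac{1}{2\pi^2}\Bigl(-\tfrac{4}{9}\zeta(3) + \tfrac{3}{4}\zeta(3)\Bigr) = \frac{1}{2\pi^2}\cdot\frac{11}{36}\zeta(3) = \frac{11\zeta(3)}{72\pi^2},
\]
which is the $\zeta(3)$-coefficient appearing in the stated formula for $\C_3(1/6)$.

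The only step that requires any care is the final arithmetic combination $-\tfrac{4}{9} + \tfrac{3}{4} = \tfrac{11}{36}$ that produces the coefficient $11/72$ in the $\zeta(3)$-term; beyond that, the proof is a direct specialization of Corollary \ref{integ-poly-ex} and no new ingredients beyond the already established identities (\ref{di-3-sin}), (\ref{di-3-cos}), and (\ref{A-zeta}) are required.
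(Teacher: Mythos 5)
Your proposal is correct and follows exactly the paper's route: the corollary is obtained by setting $x=\tfrac13$ in Corollary \ref{integ-poly-ex} for $r=2,3$ and evaluating the resulting trigonometric Dirichlet series via (\ref{di-3-sin}) and (\ref{di-3-cos}), with $\zeta_E(3)=\tfrac34\zeta(3)$ from (\ref{A-zeta}). All the numerical coefficients you compute, including $-\tfrac49+\tfrac34=\tfrac{11}{36}$, check out.
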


\begin{remark}\label{1.6}
From Corollary \ref{c(1/6)}, we immediately obtain another expression of $\zeta(3)$:
$$\zeta(3)=\frac{72\pi^2}{11}\log\left(\frac{3^{\frac1{72}}\C_3\left(\frac16\right)}{\C_2\left(\frac16\right)^{\frac13}}\right).$$
\end{remark}

\section{Multiple cosine functions}

In this section, to our purpose,
we state some basic properties of multiple cosine functions. 
Some of them have been reported in \cite[p. 848, Remark 2.7]{KK}, \cite[p. 124]{KW03}
and \cite[Proposition 3.1(4) and 5.1(4)]{KW04}.

First, we prove the following proposition which is necessary to derive several properties of multiple cosine functions. 
Note that it has appeared in \cite[p. 848]{KK} and \cite[p. 124]{KW03} without proof.

\begin{proposition}\label{pro1}
For $r = 2, 3, 4, \ldots,$ we have $\C_r(0)=1$ and $\C_r(x)$ is a meromorphic function in $x\in\mathbb C$ satisfying
$$\frac{\C_r'(x)}{\C_r(x)}=-\pi x^{r-1}\tan(\pi x).$$
Thus we have the integral representation 
$$\C_r(x)=\exp\left(-\int_0^x\pi t^{r-1}\tan(\pi t)dt\right),$$
where the contour lies in $\mathbb C\setminus\{\pm\frac12,\pm\frac32,\ldots\}.$
\end{proposition}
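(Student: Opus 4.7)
The plan is to verify the three assertions in turn---the value $\C_r(0)=1$, the logarithmic derivative formula, and the resulting integral representation---with meromorphy inherited from the Weierstrass-product theory underlying the definition (\ref{mcos-def}). The single external input is the classical partial fraction
$$\pi\tan(\pi x) = 8x\sum_{n=1,\,n\text{:odd}}^\infty \frac{1}{n^{2}-4x^{2}},$$
which comes from the Mittag--Leffler expansion of $\tan$.

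First I would observe that $P_r(0)=1$ from (\ref{Pr}), so every factor in (\ref{mcos-def}) equals $1$ at $x=0$, giving $\C_r(0)=1$ immediately. For the meromorphy, the truncation built into $P_r(y)=(1-y)\exp(y+y^{2}/2+\cdots+y^{r}/r)$ forces $\log P_r(y)=O(y^{r+1})$ as $y\to 0$, so on any compact subset of $\mathbb C\setminus\{\pm\frac12,\pm\frac32,\ldots\}$ the $n$th summand $(n/2)^{r-1}\bigl[\log P_r(x/(n/2))+(-1)^{r-1}\log P_r(-x/(n/2))\bigr]$ is of order $n^{r-1}\cdot n^{-(r+1)}=O(n^{-2})$, so the logarithm of the Weierstrass product (\ref{mcos-def}) converges uniformly there.

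The main step is the logarithmic derivative computation. A direct calculation gives the telescoping identity
$$\frac{P_r'(y)}{P_r(y)} = -\frac{1}{1-y}+(1+y+\cdots+y^{r-1}) = -\frac{y^{r}}{1-y}.$$
Applying this to both factors in each pair of (\ref{mcos-def}) and using $(-1)^{r-1}(-1)^{r}=-1$, the contribution of the $n$th pair to $\C_r'(x)/\C_r(x)$ simplifies to
$$-(n/2)^{r-2}\cdot\frac{x^{r}}{(n/2)^{r-1}}\left(\frac{1}{n/2-x}+\frac{1}{n/2+x}\right) = -\frac{8x^{r}}{n^{2}-4x^{2}}.$$
Summing over odd $n\geq 1$ and invoking the partial fraction expansion of $\pi\tan(\pi x)$ recalled above yields
$$\frac{\C_r'(x)}{\C_r(x)} = -\pi x^{r-1}\tan(\pi x),$$
as claimed.

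Finally, because $\C_r(0)=1$ and the logarithmic derivative is the meromorphic function $-\pi x^{r-1}\tan(\pi x)$ whose only singularities are poles at the odd half-integers, the integral representation follows at once by integrating from $0$ to $x$ along any contour in $\mathbb C\setminus\{\pm\frac12,\pm\frac32,\ldots\}$. The main technical obstacle I anticipate is the justification of term-by-term logarithmic differentiation of the infinite product, but this reduces to the same $O(n^{-2})$ uniform bound already used for meromorphy, so no new estimate is required.
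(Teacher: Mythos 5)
Your proposal is correct and follows essentially the same route as the paper: term-by-term logarithmic differentiation of the Weierstrass product, collapsing each pair of factors to $-8x^{r}/(n^{2}-4x^{2})$, and summing via the partial-fraction expansion of $\pi\tan(\pi x)$ (the paper uses the equivalent expansion of $\tan(\pi x/2)$ from Gradshteyn--Ryzhik). Your closed form $P_r'(y)/P_r(y)=-y^{r}/(1-y)$ is just a tidier packaging of the finite geometric-sum identities the paper writes out, and your $O(n^{-2})$ convergence remark fills in a justification the paper leaves implicit.
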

\begin{proof}
The proof goes similarly with \cite[Proposition 1]{KK05} by calculating the logarithmic derivative. 
When $r=1,$ the result follows immediately from (\ref{cos1-ex}). For $r=2,3,4,\ldots,$
by using
$$
\begin{aligned}
\C_r(x)
&=\prod_{n=1,n\text{:odd}}^\infty 
\left\{P_r\left(\frac{x}{\frac n2}\right)P_r\left(-\frac{x}{\frac n2}\right)^{(-1)^{r-1}}\right\}^{\left(\frac n2\right)^{r-1}} \\
&=\prod_{n=1}^\infty 
\left\{P_r\left(\frac{x}{\frac{2n-1}2}\right)P_r\left(-\frac{x}{\frac{2n-1}2}\right)^{(-1)^{r-1}}\right\}^{\left(\frac {2n-1}2\right)^{r-1}}
\end{aligned}
$$
and (\ref{Pr}),
we obtain
$$
\begin{aligned}
\log\C_r(x)&=\sum_{n=1}^\infty\left(\frac{2n-1}2\right)^{r-1}
\left\{\log P_r\left(\frac{x}{\frac{2n-1}2}\right)+(-1)^{r-1}\log P_r\left(-\frac{x}{\frac{2n-1}2}\right)\right\} \\
&=\sum_{n=1}^\infty\left(\frac{2n-1}2\right)^{r-1}\left\{\log\left(1-\frac{x}{\frac{2n-1}2}\right)+(-1)^{r-1} \log\left(1+\frac{x}{\frac{2n-1}2}\right)\right. \\
&\quad+\left( \frac{x}{\frac{2n-1}2}+\frac12\left(\frac{x}{\frac{2n-1}2}\right)^2+\cdots
+\frac1r\left(\frac{x}{\frac{2n-1}2}\right)^r\right) \\
&\quad\left.+(-1)^{r-1}\left( -\frac{x}{\frac{2n-1}2}+\frac12\left(-\frac{x}{\frac{2n-1}2}\right)^2+\cdots
+\frac1r\left(-\frac{x}{\frac{2n-1}2}\right)^r\right)\right\}.
\end{aligned}
$$
Hence
$$
\begin{aligned}
\frac{\C_r'(x)}{\C_r(x)}&=\sum_{n=1}^\infty\left(\frac{2n-1}2\right)^{r-1}
\left\{\frac1{x-\frac{2n-1}2} +\frac{(-1)^{r-1}}{x+\frac{2n-1}2} \right. \\
&\quad+\left(\frac1{\frac{2n-1}2}+\frac x{\left(\frac{2n-1}2\right)^2} +\cdots+\frac{x^{r-1}}{\left(\frac{2n-1}2\right)^r}\right) \\
&\quad+\left.(-1)^{r-1}\left(-\frac1{\frac{2n-1}2}+\frac x{\left(\frac{2n-1}2\right)^2} +\cdots+(-1)^r\frac{x^{r-1}}{\left(\frac{2n-1}2\right)^r}\right)\right\}. \\
\end{aligned}
$$
Then by observing the expressions
$$\frac1{\frac{2n-1}2}+\frac x{\left(\frac{2n-1}2\right)^2} +\cdots+\frac{x^{r-1}}{\left(\frac{2n-1}2\right)^r}
=\frac{\left(\frac{x}{\frac{2n-1}{2}}\right)^r-1}{x-\frac{2n-1}{2}}$$
and
$$-\frac1{\frac{2n-1}2}+\frac x{\left(\frac{2n-1}2\right)^2} +\cdots+(-1)^r\frac{x^{r-1}}{\left(\frac{2n-1}2\right)^r}
=\frac{(-1)^r\left(\frac{x}{\frac{2n-1}{2}}\right)^r-1}{x+\frac{2n-1}{2}},$$
we see that
$$
\begin{aligned}
\frac{\C_r'(x)}{\C_r(x)}&=\sum_{n=1}^\infty
\left(\frac{\frac{x^r}{\frac{2n-1}{2}}}{x-\frac{2n-1}{2}} - \frac{\frac{x^r}{\frac{2n-1}{2}}}{x+\frac{2n-1}{2}}\right) \\
&=\sum_{n=1}^\infty\frac{2x^r}{x^2-\left(\frac{2n-1}{2}\right)^2} \\
&=8x^r\sum_{n=1}^\infty\frac1{(2x)^2-(2n-1)^2} \\
&=-\pi x^{r-1}\tan(\pi x),
\end{aligned}
$$
where we have used the expansion \cite[p. 43, 1.421(1)]{GR}
$$\tan\left(\frac{\pi x}2\right)=\frac{4x}{\pi}\sum_{n=1}^\infty\frac1{(2n-1)^2-x^2}.$$
This completes the proof of Proposition \ref{pro1}.
\end{proof}

From Proposition \ref{pro1}, we get a new proof for the following result by Kurokawa and Wakayama \cite[p. 125]{KW03}.

\begin{corollary}[Duplication formulas]\label{dup}
For $r = 1,2, 3,  \ldots,$ 
$$\C_r(x)^{2^{r-1}}=\frac{\mathcal S_r(2x)}{\mathcal S_r(x)^{2^{r-1}}}.$$
\end{corollary}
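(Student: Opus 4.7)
The plan is to verify the duplication formula by differentiating both sides logarithmically and matching them, then fixing the constant of integration by evaluation at $x=0$. By Proposition~\ref{pro1} the logarithmic derivative of the left-hand side is
$$2^{r-1}\cdot\frac{\C_r'(x)}{\C_r(x)} = -2^{r-1}\pi x^{r-1}\tan(\pi x).$$
For the right-hand side I would invoke the multiple-sine analogue of Proposition~\ref{pro1}, namely the Koyama--Kurokawa formula $\mathcal{S}_r'(x)/\mathcal{S}_r(x) = \pi x^{r-1}\cot(\pi x)$ (Proposition~1 of \cite{KK05}; its proof runs in exact parallel to the calculation carried out above for $\C_r$, with the partial-fraction expansion of $\cot$ replacing that of $\tan$). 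Differentiating $\log(\mathcal{S}_r(2x)/\mathcal{S}_r(x)^{2^{r-1}})$ via this formula yields
$$2^r\pi x^{r-1}\cot(2\pi x) - 2^{r-1}\pi x^{r-1}\cot(\pi x),$$
and factoring out $2^{r-1}\pi x^{r-1}$ reduces the equality to the elementary identity $2\cot(2\theta)-\cot(\theta)=-\tan(\theta)$, which follows at once from $\cos(2\theta)=\cos^2\theta-\sin^2\theta$ and $\sin(2\theta)=2\sin\theta\cos\theta$.

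Having matched the logarithmic derivatives on the connected domain $\mathbb{C}\setminus\{\pm\tfrac12,\pm\tfrac32,\ldots\}$, it remains to fix the multiplicative constant. For $r\geq 2$ the Weierstrass products that define $\C_r$ in (\ref{mcos-def}) and $\mathcal{S}_r$ both collapse factor-by-factor to $1$ at the origin (together with the exponential prefactor $\exp(x^{r-1}/(r-1))$ in $\mathcal{S}_r$), so $\C_r(0)=1$ and $\mathcal{S}_r(0)=1$, and the two sides take the common value $1$ there. For $r=1$ the quotient $\mathcal{S}_1(2x)/\mathcal{S}_1(x)=\sin(2\pi x)/\sin(\pi x)$ is a $0/0$ indeterminate form, but its limit at $x=0$ equals $2=\C_1(0)$, closing this remaining case.

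The main obstacle I anticipate is presentational rather than mathematical: the $r=1$ endpoint requires special handling because $\mathcal{S}_1$ vanishes at the origin, so the constant-of-integration step must proceed through a limit instead of direct evaluation. Once that is spelled out, everything else is a few lines --- the trigonometric identity carries the algebraic content, and no analytic input is needed beyond Proposition~\ref{pro1} and the companion logarithmic-derivative formula for $\mathcal{S}_r$.
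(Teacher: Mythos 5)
Your proof is correct and follows essentially the same route as the paper: both rest on Proposition~\ref{pro1}, the logarithmic-derivative (equivalently, integral) formula for $\log\mathcal S_r$ from \cite{KK05}, and the identity $-\tan\theta=2\cot(2\theta)-\cot\theta$ --- the paper integrates this identity from $0$ to $x$, while you differentiate the two sides and fix the constant at the origin, which is the same computation read in the opposite direction. Your separate limiting treatment of $r=1$ is in fact slightly more careful than the paper's argument, whose appeal to $\log\mathcal S_r(x)=\int_0^x\pi t^{r-1}\cot(\pi t)\,dt$ only makes sense for $r\geq 2$.
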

\begin{proof}
By Proposition \ref{pro1} and the trigonometric identity
$$\cot(x+y)=\frac{\cot x\cot y-1}{\cot x+\cot y},$$
we have
$$
\begin{aligned}
2^{r-1}\log\C_r(x)&=-2^{r-1}\int_0^x\pi t^{r-1}\tan(\pi t)dt \\
&=2^{r-1}\int_0^x\pi t^{r-1}\left(\frac{\cot^2(\pi t)-1}{\cot(\pi t)}-\cot(\pi t)\right) dt \\
&=2^{r-1}\int_0^x\pi t^{r-1}(2\cot(2\pi t)-\cot(\pi t))dt \\
&=\int_0^x\pi (2t)^{r-1}\cot(2\pi t)d(2t)-2^{r-1}\int_0^x\pi t^{r-1}\cot(\pi t)dt \\
&=\int_0^{2x}\pi t^{r-1}\cot(\pi t)dt-2^{r-1}\int_0^x\pi t^{r-1}\cot(\pi t)dt \\
&=\log \mathcal S_r(2x)-2^{r-1}\log \mathcal S_r(x) \\
&=\log\frac{\mathcal S_r(2x)}{\mathcal S_r(x)^{2^{r-1}}},
\end{aligned}
$$
where we have used the identity $\log \mathcal S_r(x)=\int_0^x\pi t^{r-1}\cot(\pi t)dt$ 
(see \cite[Proposition 2]{KK05}).
Thus the corollary follows.
\end{proof}

\begin{proposition}\label{pro2}
For $0\leq x<1$ and $r = 2, 3, 4, \ldots,$ 
$$\log\C_r\left(\frac x2\right)=-\frac{1}{2^r}\int_0^x \pi t^{r-1}\tan\left(\frac{\pi t}{2}\right)dt.$$
\end{proposition}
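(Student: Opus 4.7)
The plan is to derive Proposition \ref{pro2} directly from the integral representation established in Proposition \ref{pro1}, via a simple rescaling of the integration variable. Since Proposition \ref{pro1} gives
$$\C_r(y)=\exp\left(-\int_0^y \pi t^{r-1}\tan(\pi t)\,dt\right),$$
I would specialize this to $y=x/2$, obtaining
$$\log\C_r\!\left(\frac{x}{2}\right)=-\int_0^{x/2}\pi t^{r-1}\tan(\pi t)\,dt.$$

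Then I would perform the change of variable $t=u/2$, so that $dt=du/2$, the lower limit stays at $0$, and the upper limit becomes $u=x$. This turns $t^{r-1}$ into $(u/2)^{r-1}=u^{r-1}/2^{r-1}$, and $\tan(\pi t)$ into $\tan(\pi u/2)$; collecting the factor $1/2^{r-1}$ from the integrand with the factor $1/2$ from $dt$ produces the overall prefactor $1/2^r$, yielding the desired identity.

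The only subtlety is justifying that the contour from Proposition \ref{pro1} can be taken along the real axis. For $0\leq x<1$ the endpoint $x/2<1/2$ lies strictly to the left of the first singularity of $\tan(\pi t)$ at $t=1/2$, so the straight line segment $[0,x/2]$ avoids the poles $\{\pm 1/2,\pm 3/2,\ldots\}$ excluded in Proposition \ref{pro1}. Hence the integral over this real segment is well defined and gives $\log\C_r(x/2)$. After the substitution, the new segment $[0,x]$ lies in $[0,1)$, and the integrand $\pi u^{r-1}\tan(\pi u/2)$ is continuous on this interval since $\tan(\pi u/2)$ has its first pole at $u=1$. There is essentially no obstacle; the proof is a one-line change of variable, and the only thing to check is the admissibility of the real contour, which follows from the restriction $0\leq x<1$.
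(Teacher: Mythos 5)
Your proof is correct and is essentially the paper's own argument: both reduce the statement to Proposition \ref{pro1}, the paper by checking that the two sides vanish at $x=0$ and have equal derivatives, you by substituting $t=u/2$ in the integral representation, which is the same computation in integrated form. Your extra remark on the admissibility of the real contour for $0\leq x<1$ is a harmless (and correct) refinement that the paper leaves implicit.
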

\begin{proof}
Since $\C_r(0)=1,$ both sides of the above equation are 0 at the boundary point $x=0.$ So it only needs to show that the logarithmic differentiations of both sides are equal,
but this immediately  follows from Proposition \ref{pro1} by  replacing $x$ with $\frac x2$.
\end{proof}

\begin{proposition}\label{pro1-0}
Let $r=2,3,4,\ldots.$ Then
\begin{enumerate}
\item[\rm(1)]  We have
$$\C_r(x+1)=\frac{\C_r(1)}{2}\prod_{k=1}^r\C_k(x)^{\binom{r-1}{k-1}}.$$
\item[\rm(2)]  For $3\leq N\in1+2\mathbb Z,$ we have
$$
\begin{aligned}
\C_r(Nx)&=A_r(N)\prod_{a=0}^{N-1}\C_r\left(x+\frac{2a}N\right)^{N^{r-1}} \\
&\quad\times\prod_{k=1}^{r-1}\prod_{a=1}^{N-1}\C_r\left(x+\frac{2a}N\right)^{(-1)^{r-k}\binom{r-1}{k-1}(2a)^{r-k}N^{k-1}},
\end{aligned}
$$
where
$$
\begin{aligned}
A_r(N)^{-1}&=\left(\C_r\left(\frac{2}N\right)\cdots\C_r\left(\frac{2(N-1)}N\right)\right)^{N^{r-1}} \\
&\quad\times\prod_{k=1}^{r-1}\prod_{a=1}^{N-1}\C_r\left(\frac{2a}N\right)^{(-1)^{r-k}\binom{r-1}{k-1}(2a)^{r-k}N^{k-1}}.
\end{aligned}
$$
\end{enumerate}
\end{proposition}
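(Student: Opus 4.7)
The plan is to establish both identities by logarithmic differentiation and then to pin down the overall multiplicative constant by specialising at $x=0$. Both parts rest on the identity
$$\frac{\C_k'(x)}{\C_k(x)} = -\pi x^{k-1}\tan(\pi x), \qquad k=1,2,3,\ldots,$$
where the case $k\geq 2$ is Proposition \ref{pro1} and the case $k=1$ follows directly from $\C_1(x) = 2\cos(\pi x)$.

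For Part (1), the logarithmic derivative of the left-hand side is $-\pi(x+1)^{r-1}\tan(\pi(x+1)) = -\pi(x+1)^{r-1}\tan(\pi x)$ by the $\pi$-periodicity of $\tan$. Differentiating the logarithm of the right-hand side factor-by-factor gives
$$-\pi\tan(\pi x)\sum_{k=1}^r\binom{r-1}{k-1}x^{k-1} = -\pi(1+x)^{r-1}\tan(\pi x)$$
by the binomial theorem. The two sides therefore differ by a constant. Evaluating at $x=0$, and using $\C_1(0)=2$ from \eqref{cos1-ex} together with $\C_k(0)=1$ for $k\geq 2$ from Proposition \ref{pro1}, the right-hand side collapses to $\tfrac{\C_r(1)}{2}\cdot 2 = \C_r(1)$, which matches the left.

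For Part (2), the left-hand side has logarithmic derivative $-\pi N^r x^{r-1}\tan(N\pi x)$. I would then invoke the tangent multiplication formula valid for odd $N$,
$$N\tan(N\pi x) = \sum_{a=0}^{N-1}\tan\!\left(\pi x + \tfrac{2\pi a}{N}\right),$$
which follows from the cotangent partial-fraction expansion once one observes that $\{2a\bmod N : 0\leq a<N\}$ permutes $\{0,1,\ldots,N-1\}$ for odd $N$ and that $\tan$ is $\pi$-periodic. This rewrites the LHS log-derivative as $-\pi N^{r-1}x^{r-1}\sum_{a=0}^{N-1}\tan(\pi(x+\tfrac{2a}{N}))$. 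Taking log-derivatives on the right-hand side, for each $a\geq 1$ the coefficient of $\tan(\pi(x+\tfrac{2a}{N}))$ combines $N^{r-1}(x+\tfrac{2a}{N})^{r-1}$ (the $k=r$ contribution from the outer product) with the $k=1,\ldots,r-1$ terms of the double product; the key simplification is the binomial collapse
$$\sum_{k=1}^r(-1)^{r-k}\binom{r-1}{k-1}(2a)^{r-k}(Nx+2a)^{k-1} = (-1)^{r-1}(-Nx)^{r-1} = N^{r-1}x^{r-1},$$
which makes every $a$-indexed coefficient equal to $N^{r-1}x^{r-1}$. The $a=0$ term, coming solely from the outer product, is also $N^{r-1}x^{r-1}$, so the two logarithmic derivatives agree.

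The main obstacle is the combinatorial bookkeeping in Part (2): one must arrange the exponents from both products so that the binomial collapse above happens uniformly in $a$, and verify that the boundary case $a=0$ is consistent. Once the logarithmic derivatives match, the multiplicative constant $A_r(N)$ is forced by evaluating at $x=0$, yielding precisely the stated expression for $A_r(N)^{-1}$ (using $\C_r(0)=1$ from Proposition \ref{pro1}).
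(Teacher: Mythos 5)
Your argument follows the paper's proof essentially verbatim: both parts are obtained by matching logarithmic derivatives via Proposition \ref{pro1}, the tangent multiplication formula for odd $N$, and a binomial expansion, with the multiplicative constant fixed by evaluating at $x=0$. One point in part (2) deserves attention, however: the binomial collapse you invoke,
$$\sum_{k=1}^r(-1)^{r-k}\binom{r-1}{k-1}(2a)^{r-k}N^{k-1}\left(x+\tfrac{2a}{N}\right)^{k-1}=N^{r-1}x^{r-1},$$
is the logarithmic derivative of the right-hand side only if the $k$-th factor of the double product is $\C_k\left(x+\frac{2a}N\right)$, because it is $\frac{d}{dx}\log\C_k(y)=-\pi y^{k-1}\tan(\pi y)$ that supplies the needed power $\left(x+\frac{2a}N\right)^{k-1}$. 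With the base $\C_r$ as printed in the statement, every factor contributes $\left(x+\frac{2a}N\right)^{r-1}$ instead, and the exponents sum to $(N-2a)^{r-1}\left(x+\frac{2a}N\right)^{r-1}\neq N^{r-1}x^{r-1}$, so the two logarithmic derivatives would not agree. Your computation therefore proves the version of the identity with $\C_k$ in the double product (and correspondingly in $A_r(N)^{-1}$); this is exactly what the paper's own derivation produces term by term and matches the multiple-sine analogue in Kurokawa--Koyama, so the printed $\C_r$ should be regarded as a typo rather than a defect in your argument --- but you should state explicitly which version you are proving.
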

\begin{proof}
We employ the method in \cite[Theorem 2.10(a) and (b)]{KK}. By Proposition \ref{pro1}, we have
$$
\begin{aligned}
\frac{\C_r'(x+1)}{\C_r(x+1)}&=-\pi(x+1)^{r-1}\tan(\pi x) \\
&=-\pi\sum_{k=1}^r\binom{r-1}{k-1}x^{k-1}\tan(\pi x) \\
&=\sum_{k=1}^r\binom{r-1}{k-1}\frac{\C_k'(x)}{\C_k(x)},
\end{aligned}
$$
thus
\begin{equation}\label{d-Cr} 
\begin{aligned}
\frac{d}{dx}\log \C_r(x+1)&=\sum_{k=1}^r\binom{r-1}{k-1}\frac d{dx}\log\C_k(x) \\
&=\frac d{dx}\log\prod_{k=1}^r\C_k(x)^{\binom{r-1}{k-1}}.
\end{aligned}
\end{equation}
Therefore  
\begin{equation}\label{A} 
\C_r(x+1)=C\prod_{k=1}^r\C_k(x)^{\binom{r-1}{k-1}}
\end{equation}
with some constant $C.$
Now put
\begin{equation}\label{B}
F(x)=\frac{\C_r(x+1)}{\prod_{k=1}^r\C_k(x)^{\binom{r-1}{k-1}}}.
\end{equation}
Since $\C_1(0)=2$ and $\C_2(0)=\cdots=\C_r(0)=1,$ by (\ref{A}) and (\ref{B})  we have
$$C=F(0)=\lim_{x\to0}\frac{\C_r(x+1)}{\prod_{k=1}^r\C_k(x)^{\binom{r-1}{k-1}}}=\frac{\C_r(1)}{2}$$
and (1) follows.

For (2), let $3\leq N\in1+2\mathbb Z.$  By
\begin{equation}\label{cos-mul}
2\cos(Nx)=\prod_{a=0}^{N-1}\left\{2\cos\left(x+\frac{2\pi a}{N}\right)\right\}
\end{equation}
 (see \cite[p. 41, 1.393(1)]{GR}), we have
\begin{equation}\label{tan-eq} 
\begin{aligned}
N\tan(\pi Nx)&=-\frac1\pi \frac d{dx}\log2\cos(\pi Nx) \\
&=-\sum_{a=0}^{N-1}\frac1\pi\frac d{dx}\log 2\cos\left(\pi x+\frac{2\pi a}{N}\right) \\
&=\sum_{a=0}^{N-1}\tan\pi\left(x+\frac{2a}{N}\right).
\end{aligned}
\end{equation}
Then combing Proposition \ref{pro1} and (\ref{tan-eq}), we get
\begin{equation}\label{Cr-tan}
\begin{aligned}
\frac d{dx}\log\C_r(Nx)&=-\pi N(Nx)^{r-1}\tan(\pi Nx) \\
&=-\pi N^{r-1}x^{r-1}\sum_{a=0}^{N-1}\tan\pi\left(x+\frac{2a}{N}\right).
\end{aligned}
\end{equation}
Since 
$$x^{r-1}=\left(x+\frac{2a}{N}\right)^{r-1}+\sum_{k=1}^{r-1}\binom{r-1}{k-1}\left(-\frac{2a}{N}\right)^{r-k}
\left(x+\frac{2a}{N}\right)^{k-1},$$ 
by applying Proposition \ref{pro1} again we obtain
\begin{equation}\label{Cr-tan2}
\begin{aligned}
\frac d{dx}\log\C_r(Nx)
&=-\pi N^{r-1}\left(x+\frac{2a}{N}\right)^{r-1}\sum_{a=0}^{N-1}\tan\pi\left(x+\frac{2a}{N}\right) \\
&\quad-\pi N^{r-1}\sum_{k=1}^{r-1}\binom{r-1}{k-1}\left(-\frac{2a}{N}\right)^{r-k} \left(x+\frac{2a}{N}\right) ^{k-1}
\sum_{a=0}^{N-1}\tan\pi\left(x+\frac{2a}{N}\right)\\
&=N^{r-1}\sum_{a=0}^{N-1}\frac{d}{dx}\left(\log\C_r \left(x+\frac{2a}{N}\right) \right. \\
&\left.\quad\qquad\qquad\qquad
+\sum_{k=1}^{r-1}(-1)^{r-k}\binom{r-1}{k-1}\left(\frac{2a}{N}\right)^{r-k}\log\C_r \left(x+\frac{2a}{N}\right) 
\right)
\\
&=N^{r-1}\sum_{a=0}^{N-1}\frac{d}{dx}\left(\log\C_r \left(x+\frac{2a}{N}\right) \right) \\
&\quad
+N^{r-1}\sum_{a=0}^{N-1}
\sum_{k=1}^{r-1}(-1)^{r-k}\binom{r-1}{k-1}\left(\frac{2a}{N}\right)^{r-k}\frac{d}{dx}\left(\log\C_r \left(x+\frac{2a}{N}\right) 
\right)
\\
&=N^{r-1}\frac d{dx}\log\left(\prod_{a=0}^{N-1}\C_r\left(x+\frac{2a}{N}\right)\right) \\
&\quad+N^{r-1}\frac d{dx}\log\left(\prod_{a=1}^{N-1}\prod_{k=1}^{r-1}
\C_r\left(x+\frac{2a}{N}\right)^{(-1)^{r-k}\binom{r-1}{k-1}\left(\frac{2a}{N}\right)^{r-k}}\right),
\end{aligned}
\end{equation}
which leads to (2).
\end{proof}

\begin{proposition}\label{pro1-1}
For $r = 2, 3, 4, \ldots,$ the multiple cosine function $\C_r(x)$ satisfies the following second order algebraic differential equation
$$\C_r''(x)=(1-x^{1-r})\frac{\C_r'(x)^2}{\C_r(x)}+(r-1)\frac{\C_r'(x)}{x}-\pi^2x^{r-1}\C_r(x)$$
with $\C_r(0)=1$ and $\C_r'(0)=0.$ 
\end{proposition}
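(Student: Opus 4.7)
The plan is to obtain the ODE by differentiating the logarithmic-derivative formula from Proposition \ref{pro1} a second time, and then repackaging every trigonometric quantity that appears back in terms of $\C_r$ and $\C_r'$.

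Starting point: Proposition \ref{pro1} gives $\C_r'(x) = -\pi x^{r-1}\tan(\pi x)\,\C_r(x)$. Applying the product rule to the three factors $x^{r-1}$, $\tan(\pi x)$, $\C_r(x)$ yields
$$
\C_r''(x) = -\pi(r-1)x^{r-2}\tan(\pi x)\,\C_r(x) \;-\; \pi^{2} x^{r-1}\sec^{2}(\pi x)\,\C_r(x) \;-\; \pi x^{r-1}\tan(\pi x)\,\C_r'(x).
$$
I would then massage the three terms separately. The first is, by the defining identity, just $(r-1)\C_r'(x)/x$. The third, again by the defining identity, equals $\C_r'(x)^{2}/\C_r(x)$.

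The only nontrivial step is the middle term, which is where the nonlinear piece of the ODE emerges. I would split it using the Pythagorean identity $\sec^{2}(\pi x) = 1 + \tan^{2}(\pi x)$, giving $-\pi^{2}x^{r-1}\C_r(x)$ plus $-\pi^{2}x^{r-1}\tan^{2}(\pi x)\,\C_r(x)$. In the second piece I recognize $\pi^{2}x^{2(r-1)}\tan^{2}(\pi x)\,\C_r(x)^{2} = \C_r'(x)^{2}$, so that $\pi^{2}x^{r-1}\tan^{2}(\pi x)\,\C_r(x) = x^{1-r}\,\C_r'(x)^{2}/\C_r(x)$. Summing all contributions produces exactly
$$
\C_r''(x) = (1-x^{1-r})\frac{\C_r'(x)^{2}}{\C_r(x)} + (r-1)\frac{\C_r'(x)}{x} - \pi^{2}x^{r-1}\C_r(x),
$$
as claimed.

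For the initial conditions: $\C_r(0)=1$ is already recorded in Proposition \ref{pro1}, and substituting $x=0$ into $\C_r'(x)=-\pi x^{r-1}\tan(\pi x)\,\C_r(x)$ gives $\C_r'(0)=0$ because $r\ge 2$ forces $x^{r-1}\to 0$ while $\tan(\pi x)\to 0$ and $\C_r(0)=1$ remain finite. There is no real obstacle here; the only thing to be a little careful about is the formal status of the terms $\C_r'(x)/x$ and $x^{1-r}\C_r'(x)^{2}/\C_r(x)$ at $x=0$, but both extend continuously since $\C_r'$ vanishes to sufficiently high order at the origin, so the differential equation is a genuine identity on $\mathbb C\setminus\{\pm\tfrac12,\pm\tfrac32,\ldots\}$ and extends to $x=0$ by continuity.
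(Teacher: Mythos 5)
Your proof is correct and follows essentially the same route as the paper: both differentiate the identity $\C_r'(x)/\C_r(x)=-\pi x^{r-1}\tan(\pi x)$ from Proposition \ref{pro1} once more, invoke $\sec^2(\pi x)=1+\tan^2(\pi x)$, and substitute $\tan(\pi x)$ back in terms of $\C_r'/\C_r$; the paper merely arranges the bookkeeping by differentiating $\frac{1}{\pi x^{r-1}}\frac{\C_r'(x)}{\C_r(x)}=-\tan(\pi x)$ and equating the two resulting expressions, whereas you apply the product rule to $\C_r'=-\pi x^{r-1}\tan(\pi x)\,\C_r$ directly. Your additional remarks on the initial conditions and the behaviour at $x=0$ are a harmless (and slightly more careful) supplement to what the paper records.
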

\begin{proof}
From Proposition \ref{pro1}, we obtain
\begin{equation}\label{E}
\begin{aligned}
\frac{d}{dx}\left(\frac1{\pi x^{r-1}}\frac{\C_r'(x)}{\C_r(x)}\right)&=-\pi\sec^2(\pi x) \\
&=-\pi(\tan^2(\pi x)+1) \\
&=-\pi\left(\left(-\frac1{\pi x^{r-1}}\frac{\C_r'(x)}{\C_r(x)}\right)^2+1\right).
\end{aligned}
\end{equation}
On the other hand, by applying the derivative formula in calculus directly, we have
\begin{equation}\label{F}
\frac{d}{dx}\left(\frac1{\pi x^{r-1}}\frac{\C_r'(x)}{\C_r(x)}\right)=-\frac{r-1}{\pi x^{r}}\frac{\C_r'(x)}{\C_r(x)}
+\frac1{\pi x^{r-1}}\left(\frac{\C_r''(x)}{\C_r(x)}-\frac{\C_r'(x)^2}{\C_r(x)^2}\right).
\end{equation}
Then  by comparing (\ref{E}) and (\ref{F}), we get
$$-\frac{r-1}x\C_r'(x)+\C_r''(x)-\frac{\C_r'(x)^2}{\C_r(x)}
=-\frac1{x^{r-1}}\frac{\C_r'(x)^2}{\C_r(x)}-\pi^2x^{r-1}\C_r(x),$$
which is equivalent to the statement of the proposition.
\end{proof}

\begin{remark}
Propositions \ref{pro1-1} is an analogy of Painlev\'e's differential equation of type III. 
\end{remark}

\section{Proofs of the results}\label{proofs}

In this section, we prove the results stated in Section \ref{main results}.

\subsection*{The first proof of Theorem \ref{thm1}.}
As remarked by Allouche in an email to us, this result can be implied by (\ref{K-K})
if using $$\sin \theta=2\sin \frac \theta2 \cos \frac\theta2$$  to write 
\begin{equation}\label{cos1}
\log\left(\cos\frac\theta 2\right)=\log(\sin\theta)-\log\left(\sin\frac\theta 2\right)-\log2.
\end{equation}
 and noticing the relation between $\C_{r}(x)$ and $\mathcal S_{r}(x)$ (see (\ref{sin-cos})). 
 Following his idea, we give a detailed proof as follows. 
 
From (\ref{cos1}) and (\ref{K-K}) we have
\begin{equation}\label{thm-e1}
\begin{aligned}
\int_0^x\theta^{r-2}\log\left(\cos\frac\theta 2\right)d\theta
&=\int_0^x\theta^{r-2}\log(\sin\theta)d\theta \\
&\quad-2^{r-1}\int_0^{\frac x2}\theta^{r-2}\log(\sin\theta)d\theta -\log2\int_0^x\theta^{r-2} d\theta \\
&=\frac{x^{r-1}}{r-1}\log(\sin x)-\frac{\pi^{r-1}}{r-1}\log\mathcal S_r\left(\frac x{\pi}\right) \\
&\quad-2^{r-1}\left(\frac{\left(\frac x2\right)^{r-1}}{r-1}\log\left(\sin \frac x2\right)
-\frac{\pi^{r-1}}{r-1}\log\mathcal S_r\left(\frac x{2\pi}\right)\right) \\
&\quad-\frac{x^{r-1}}{r-1}\log2\\
&=\frac{x^{r-1}}{r-1}\left( \log(\sin x)-\log\left(\sin \frac x2\right)-\log2\right) \\
&\quad-\frac{\pi^{r-1}}{r-1}\left(\log\mathcal S_r\left(\frac x{\pi}\right)-2^{r-1}\log\mathcal S_r\left(\frac x{2\pi}\right) \right).
\end{aligned}
\end{equation}
On the other hand, the logarithmic of (\ref{sin-cos}) yields
\begin{equation}\label{du-form}
2^{r-1}\log\mathcal C_r\left(\frac x{2\pi}\right)=\log\mathcal S_r\left(\frac x{\pi}\right)
-2^{r-1}\log\mathcal S_r\left(\frac x{2\pi}\right).
\end{equation}
Then substituting  (\ref{cos1}) and (\ref{du-form}) into (\ref{thm-e1}), we get our result.

\subsection*{The second proof of Theorem \ref{thm1}.}
Here we also derive this result directly.

From Proposition \ref{pro2} and the integration by parts, we have
$$
\begin{aligned}
\log\C_r\left(\frac x2\right)&=\frac1{2^{r-1}}\left(\left[t^{r-1}\log\left(\cos\frac{\pi t}{2}\right)\right]_0^x 
- \int_0^x (r-1)t^{r-2}\log\left(\cos\frac{\pi t}{2}\right)dt\right) \\
&=\frac1{2^{r-1}}\left(x^{r-1}\log\left(\cos\frac{\pi x}{2}\right)
-(r-1) \int_0^x t^{r-2}\log\left(\cos\frac{\pi t}{2}\right)dt\right).
\end{aligned}
$$
Then changing the variable to $\theta=\pi t$ in this integral, we have
$$
\log\C_r\left(\frac x2\right)
=\frac1{2^{r-1}}\left(x^{r-1}\log\left(\cos\frac{\pi x}{2}\right)
-\frac{r-1}{\pi^{r-1}} \int_0^{\pi x} \theta^{r-2}\log\left(\cos\frac{\theta}{2}\right)d\theta\right).
$$
Now, letting $x\to\frac x\pi,$ the assertion follows.

\subsection*{Proof of Theorem \ref{thm1-gen}.}

To prove this, we need the following two lemmas.

\begin{lemma}\label{lem1-gen} For $r=0,1,2,\ldots,$ we have
$$\begin{aligned}
\int_0^{x}\theta^r\cos(n\theta)d\theta&=\sum_{k=0}^r\binom rk\frac{k!}{n^{k+1}}\sin\left(nx+\frac{k\pi}{2}\right)x^{r-k}
-\frac{r!}{n^{r+1}}\sin\left(\frac{r\pi}{2}\right).
\end{aligned}$$
\end{lemma}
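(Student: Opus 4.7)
The plan is to prove the lemma by showing that both sides agree at $x=0$ and have the same derivative. Let $F_r(x)$ denote the right-hand side. This approach avoids having to simultaneously track a companion formula for $\int_0^x\theta^r\sin(n\theta)\,d\theta$, which would otherwise be needed in a straight induction-by-parts argument.

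First I would verify $F_r(0)=0$. At $x=0$, every summand in $\sum_{k=0}^r$ is killed by the factor $x^{r-k}$ except the $k=r$ term, which contributes $\frac{r!}{n^{r+1}}\sin(r\pi/2)$; this cancels exactly against the subtracted constant. Next I would differentiate $F_r(x)$ via the product rule. The derivative splits into two sums: one coming from $\frac{d}{dx}\sin(nx+k\pi/2)$, which produces $\cos(nx+k\pi/2)$ factors with an extra power of $n$, and one coming from $\frac{d}{dx}x^{r-k}$, which produces $\sin(nx+k\pi/2)$ factors with a coefficient $(r-k)$. After reindexing the second sum by $j=k+1$, using the identities $\sin(nx+(j-1)\pi/2)=-\cos(nx+j\pi/2)$ and $\binom{r}{k}k!(r-k)=\binom{r}{k+1}(k+1)!$, the two sums cancel term-by-term except for the $k=0$ summand of the first sum, which is exactly $x^r\cos(nx)$. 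Since the left-hand side has the same derivative $x^r\cos(nx)$ and likewise vanishes at $x=0$, the lemma follows.

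Alternatively, one could work through the complex exponential form $\cos(n\theta)=\operatorname{Re}(e^{in\theta})$ and compute $\int_0^x\theta^r e^{in\theta}\,d\theta$ in closed form by repeated integration by parts, obtaining an antiderivative of the shape $\sum_{k=0}^r \frac{(-1)^k r!}{(r-k)!(in)^{k+1}}\theta^{r-k}e^{in\theta}$; taking real parts and using $i^{k+1}=e^{i(k+1)\pi/2}$ together with $\cos(nx+(k+1)\pi/2)=-\sin(nx+k\pi/2)$ then recovers the stated formula, with the boundary value at $\theta=0$ producing the constant $-\frac{r!}{n^{r+1}}\sin(r\pi/2)$.

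The only technical subtlety in either approach is the careful handling of the phase shift that converts $\sin(nx+(k+1)\pi/2)$ (or the real part of $i^{k+1}$) into $-\cos(nx+k\pi/2)$; once this shift is tracked correctly, the telescoping (in the derivative approach) or the identification of the polynomial-times-sine part (in the complex-exponential approach) is immediate, and no deeper estimate is needed.
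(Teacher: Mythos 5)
Your proposal is correct and rests on the same closed-form antiderivative
$\int\theta^r\cos(n\theta)\,d\theta=\sum_{k=0}^{r}\binom{r}{k}\frac{k!}{n^{k+1}}\theta^{r-k}\sin\bigl(n\theta+\tfrac{k\pi}{2}\bigr)+C$
that the paper uses, followed by the same endpoint evaluation in which only the $k=r$ term survives at $\theta=0$. The only difference is that the paper simply cites this antiderivative from Gradshteyn--Ryzhik, whereas you supply a self-contained verification of it (by differentiation and telescoping, or via $\operatorname{Re}\,e^{in\theta}$), and both your phase-shift identity $\sin\bigl(nx+\tfrac{(j-1)\pi}{2}\bigr)=-\cos\bigl(nx+\tfrac{j\pi}{2}\bigr)$ and the binomial identity $\binom{r}{k}k!(r-k)=\binom{r}{k+1}(k+1)!$ check out, so the argument is sound.
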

\begin{proof}
Recall the indefinite integral \cite[p. 226, 2.633(2)]{GR}
$$\int\theta^r\cos(n\theta)d\theta=\sum_{k=0}^rk!\binom rk\frac{\theta^{r-k}}{n^{k+1}}\sin\left(n\theta+\frac{k\pi}{2}\right)+C.$$
Hence, in view of the relation $0^{r-k}=0$ if $k<r$ and 1 if $k=r,$ we have
$$\begin{aligned}
\int_0^{x}\theta^r\cos(n\theta)d\theta&=\sum_{k=0}^{r-1}k!\binom rk\frac1{n^{k+1}}
\left[ \theta^{r-k}\sin\left(n\theta+\frac{k\pi}{2}\right)\right]_0^{x} \\
&\quad+r!\frac1{n^{r+1}}
\left[ \sin\left(n\theta+\frac{r\pi}{2}\right)\right]_0^{x} \\
&=\sum_{k=0}^{r-1}k!\binom rk\frac1{n^{k+1}}
x^{r-k}\sin\left(nx+\frac{k\pi}{2}\right) \\
&\quad+r!\frac1{n^{r+1}}\left(
\sin\left(nx+\frac{r\pi}{2}\right)-\sin\left(\frac{r\pi}{2}\right)\right) \\
&=\sum_{k=0}^{r}k!\binom rk\frac1{n^{k+1}}
x^{r-k}\sin\left(nx+\frac{k\pi}{2}\right) \\
&\quad-r!\frac1{n^{r+1}}\sin\left(\frac{r\pi}{2}\right),
\end{aligned}$$
which completes the proof of Lemma \ref{lem1-gen}.
\end{proof}

\begin{lemma}\label{lem2-gen} For $r=0,1,2,\ldots,$ we have
$$\begin{aligned}
\sum_{n=1}^\infty\frac{(-1)^{n-1}}{n}\int_0^{\frac\pi 2}\theta^r\cos(n\theta)d\theta
&=\sum_{k=0}^{\left\lfloor \frac{r}{2}\right\rfloor }(-1)^k(2k)!\binom{r}{2k}
\left(\frac\pi2\right)^{r-2k} \\
&\quad\times\beta(2k+2) \\
&\quad+\sum_{k=1}^{\left\lceil \frac{r}2\right\rceil}\frac{(-1)^{k-1}(2k-1)!}{2^{2k+1}}\binom{r}{2k-1}
\left(\frac\pi2\right)^{r-2k+1} \\
&\quad\times\zeta_E(2k+1) \\
&\quad-r!\sin\left(\frac{r\pi}{2}\right)\zeta_E(r+2),
\end{aligned}$$
where $\lfloor x\rfloor =\max\{m\in \mathbb {Z} \mid m\leq x\}$ and $\lceil x\rceil =\min\{m\in \mathbb {Z} \mid m\geq x\}.$
\end{lemma}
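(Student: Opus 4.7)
The plan is to substitute the explicit formula of Lemma \ref{lem1-gen} with $x=\pi/2$ into the left-hand side, interchange the order of summation and the finite sum over $k$, and then evaluate each resulting Dirichlet series in closed form according to the parity of $k$. From Lemma \ref{lem1-gen} with $x=\pi/2$, and using $\sin(n\pi/2+k\pi/2)=\sin(n\pi/2)\cos(k\pi/2)+\cos(n\pi/2)\sin(k\pi/2)$, the inner integral becomes a linear combination of terms of the form $n^{-k-1}\sin((n+k)\pi/2)$, together with the $n$-independent correction $-r!n^{-r-1}\sin(r\pi/2)$. Multiplying by $(-1)^{n-1}/n$ and summing over $n\geq 1$ — the interchange being justified by absolute convergence, since each series $\sum n^{-k-2}$ converges for $k\geq 0$ — one is reduced to evaluating
$$S_k:=\sum_{n=1}^{\infty}\frac{(-1)^{n-1}}{n^{k+2}}\sin\!\Bigl(\frac{(n+k)\pi}{2}\Bigr)$$
for $0\leq k\leq r$, together with the term $-r!\sin(r\pi/2)\,\zeta_E(r+2)$ coming from the correction.

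The key step is the parity case analysis for $S_k$. When $k=2j$ is even, $\cos(k\pi/2)=(-1)^j$ and $\sin(k\pi/2)=0$, so only odd indices $n=2m+1$ contribute, and using $\sin((2m+1)\pi/2)=(-1)^m$ we obtain $S_{2j}=(-1)^j\sum_{m=0}^{\infty}(-1)^m(2m+1)^{-(2j+2)}=(-1)^j\beta(2j+2)$. When $k=2j-1$ is odd, $\cos(k\pi/2)=0$ and $\sin(k\pi/2)=(-1)^{j-1}$, so only even indices $n=2m$ contribute, and using $\cos(m\pi)=(-1)^m$ together with the factor $(-1)^{2m-1}=-1$ produces $S_{2j-1}=(-1)^{j-1}2^{-(2j+1)}\zeta_E(2j+1)$. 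These are exactly the expressions appearing in the statement.

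Finally, I would collect the contributions: the even-$k$ terms $k=2j$ for $0\leq j\leq\lfloor r/2\rfloor$ yield the $\beta(2k+2)$-sum, the odd-$k$ terms $k=2j-1$ for $1\leq j\leq\lceil r/2\rceil$ yield the $\zeta_E(2k+1)$-sum, and the correction piece gives the trailing $-r!\sin(r\pi/2)\zeta_E(r+2)$. Substituting $k=2j$ and $k=2j-1$ respectively into the binomial coefficients and the powers of $\pi/2$ from Lemma \ref{lem1-gen} reproduces the displayed identity verbatim. The main obstacle — really the only delicate point — is the bookkeeping of signs and parity in the evaluation of $S_k$; once that is done correctly, the rest is essentially term-by-term matching.
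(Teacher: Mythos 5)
Your plan is correct and follows essentially the same route as the paper: substitute Lemma \ref{lem1-gen} at $x=\pi/2$, expand $\sin(n\pi/2+k\pi/2)$ by the addition formula, and split by the parity of $k$ so that the even-$k$ terms (odd $n$ surviving) give $\beta(2k+2)$, the odd-$k$ terms (even $n$ surviving) give $2^{-(2k+1)}\zeta_E(2k+1)$, and the correction term gives $-r!\sin(r\pi/2)\zeta_E(r+2)$. The paper organizes the same computation as three sums $\textbf{I}_1,\textbf{I}_2,\textbf{I}_3$ rather than your single series $S_k$, but the sign and index bookkeeping in your sketch matches the paper's exactly.
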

\begin{proof}
Setting $x=\frac\pi2$ in Lemma \ref{lem1-gen}, by the fundamental formula of angle addition for the sine function, we obtain
\begin{equation}\label{lem1}
\begin{aligned}
\sum_{n=1}^\infty\frac{(-1)^{n-1}}{n}\int_0^{\frac\pi 2}\theta^r\cos(n\theta)d\theta
&=\sum_{n=1}^\infty\frac{(-1)^{n-1}}{n}\left(
\sum_{k=0}^rk!\binom rk\left(\frac\pi2\right)^{r-k}\frac1{n^{k+1}} \right. \\
&\quad\times\left(\sin\left(\frac{n\pi}{2}\right)\cos\left(\frac{k\pi}{2}\right)
+\sin\left(\frac{k\pi}{2}\right)\cos\left(\frac{n\pi}{2}\right)\right) \\
&\left.\quad-r!\frac1{n^{r+1}}\sin\left(\frac{r\pi}{2}\right)\right).
\end{aligned}
\end{equation}
For the calculation of the right hand side,  we split the summation into three parts $\textbf{I}_1,\textbf{I}_2,\textbf{I}_3$ according to the terms
$$\sin\left(\frac{n\pi}{2}\right)\cos\left(\frac{k\pi}{2}\right),~\sin\left(\frac{k\pi}{2}\right)\cos\left(\frac{n\pi}{2}\right)
\text{ and }\sin\left(\frac{r\pi}{2}\right).$$
First we calculate the sum $\textbf{I}_1.$ From (\ref{beta-def}), we have
\begin{equation}\label{lem2}
\begin{aligned}
\textbf{I}_1&=\sum_{n=1}^\infty\frac{(-1)^{n-1}}{n}
\sum_{k=0}^rk!\binom rk\left(\frac\pi2\right)^{r-k}\frac1{n^{k+1}}
\sin\left(\frac{n\pi}{2}\right)\cos\left(\frac{k\pi}{2}\right) \\
&=\sum_{k=0}^rk!\binom rk\left(\frac\pi2\right)^{r-k}\cos\left(\frac{k\pi}{2}\right)
\sum_{n=1}^\infty\frac{(-1)^{n-1}}{n}\frac1{n^{k+1}}\sin\left(\frac{n\pi}{2}\right) \\
&=\sum_{k=0}^{\left\lfloor \frac{r}{2}\right\rfloor}(2k)!\binom r{2k}\left(\frac\pi2\right)^{r-2k}(-1)^k
\sum_{n=1}^\infty\frac{(-1)^{n-1}}{(2n-1)^{2k+2}} \\
&=\sum_{k=0}^{\left\lfloor \frac{r}{2}\right\rfloor}(-1)^k(2k)!\binom r{2k}\left(\frac\pi2\right)^{r-2k}
\beta(2k+2).
\end{aligned}
\end{equation}
Then we calculate the sum $\textbf{I}_2.$  From (\ref{A-zeta}), we have
\begin{equation}\label{lem3}
\begin{aligned}
\textbf{I}_2&=\sum_{n=1}^\infty\frac{(-1)^{n-1}}{n}
\sum_{k=0}^rk!\binom rk\left(\frac\pi2\right)^{r-k}\frac1{n^{k+1}}
\sin\left(\frac{k\pi}{2}\right)\cos\left(\frac{n\pi}{2}\right) \\
&=\sum_{k=0}^rk!\binom rk\left(\frac\pi2\right)^{r-k}\sin\left(\frac{k\pi}{2}\right)
\sum_{n=1}^\infty\frac{(-1)^{n-1}}{n}\frac1{n^{k+1}}\cos\left(\frac{n\pi}{2}\right) \\
&=\sum_{k=1}^{\left\lceil \frac r2\right\rceil }(2k-1)!\binom r{2k-1}\left(\frac\pi2\right)^{r-2k+1}(-1)^{k-1}
\frac1{2^{2k+1}}\sum_{n=1}^\infty\frac{(-1)^{n-1}}{n^{2k+1}} \\
&=\sum_{k=1}^{\left\lceil \frac r2\right\rceil}\frac{(-1)^{k-1}(2k-1)!}{2^{2k+1}}\binom r{2k-1}\left(\frac\pi2\right)^{r-2k+1}
\zeta_E(2k+1).
\end{aligned}
\end{equation}
Finally, (\ref{A-zeta}) also implies that
\begin{equation}\label{lem4}
\begin{aligned}
\textbf{I}_3&=\sum_{n=1}^\infty\frac{(-1)^{n-1}}{n}\left(-r!\frac1{n^{r+1}}\right)\sin\left(\frac{r\pi}{2}\right) \\
&=-r!\sin\left(\frac{r\pi}{2}\right)\sum_{n=1}^\infty\frac{(-1)^{n-1}}{n^{r+2}} \\
&=-r!\sin\left(\frac{r\pi}{2}\right)\zeta_E(r+2).
\end{aligned}
\end{equation}
Substituting  (\ref{lem2}), (\ref{lem3}) and (\ref{lem4}) into (\ref{lem1}) we get the lemma.
\end{proof}

Now we go to the proof of Theorem \ref{thm1-gen}.
From the following series expansion (see \cite[p. 148]{To})
$$\log\left(\cos\frac \theta2\right)=-\log2+\sum_{n=1}^\infty(-1)^{n-1}\frac{\cos(n\theta)}{n},$$
we have
$$
\begin{aligned}
\int_0^{\frac\pi 2}\theta^{r-2}\log\left(\cos\frac \theta2\right)d\theta
&=\int_0^{\frac\pi 2}\theta^{r-2}\left(-\log2+\sum_{n=1}^\infty(-1)^{n-1}\frac{\cos(n\theta)}{n}\right)d\theta \\
&=-\frac{\log2}{r-1}\left(\frac\pi2\right)^{r-1}
+\sum_{n=1}^\infty\frac{(-1)^{n-1}}{n}\int_0^{\frac\pi 2}\theta^{r-2}\cos(n\theta)d\theta
\end{aligned}
$$
 for $r=2,3,4,\ldots.$ 
Then replacing $r$ by $r-2$ in Lemma \ref{lem2-gen} and substituting the result into the right hand side of the above equation, after some elementary calculations, we obtain the desired result.

\subsection*{Proof of Theorem \ref{integ-2}.}

From Euler's infinite product representation of the cosine function (\ref{cose}), we have
$$\log\left(\cos\theta\right)=\sum_{m=0}^\infty\log\left(1-\frac{4\theta^2}{(2m+1)^2\pi^2}\right).$$
Since 
$$\log(1-\theta)=-\sum_{n=1}^\infty\frac{\theta^n}{n}$$
for $|\theta|<1,$ we see that 
$$
\begin{aligned}
\log\left(\cos\theta\right)&=-\sum_{n=1}^\infty\left(\frac{2\theta}{\pi}\right)^{2n}\frac1n\sum_{m=0}^\infty\frac1{(2m+1)^{2n}} \\
&=-\sum_{n=1}^\infty\frac{2^{2n}\lambda(2n)}{\pi^{2n}n}\theta^{2n}
\end{aligned}
$$
for $|\theta|<\frac\pi2.$ In the above equation, replacing $\theta$ by $\frac\theta2,$ then multiplying both sides by $\theta^{r-2}$ and integrating the result  from 0 to $x,$ we have
\begin{equation}\label{C}
\int_0^x\theta^{r-2}\log\left(\cos \frac\theta2\right)d\theta
=-\sum_{n=1}^\infty\frac{\lambda(2n)}{\pi^{2n}n}\frac{x^{2n+r-1}}{2n+r-1},
\end{equation}
where $0\leq x<\pi.$
On the other hand, by Theorem \ref{thm1} we have
\begin{equation}\label{D}
\int_0^x \theta^{r-2}\log\left(\cos\frac\theta2\right)d\theta=\frac{x^{r-1}}{r-1}\log\left(\cos\frac x2\right)
-\frac{(2\pi)^{r-1}}{r-1}\log\C_r\left(\frac x{2\pi}\right).
\end{equation}
Comparing (\ref{C}) and (\ref{D}) we get
$$ 
\frac{(2\pi)^{r-1}}{r-1}\log\C_r\left(\frac x{2\pi}\right)
=\frac{x^{r-1}}{r-1}\log\left(\cos\frac x2\right)
+\sum_{n=1}^\infty\frac{\lambda(2n)}{\pi^{2n}n}\frac{x^{2n+r-1}}{2n+r-1}.$$
The result is now easily established.

\subsection*{Proof of Theorem \ref{integ-poly}.}

(1) By Proposition \ref{pro2}, we have
$$\log\C_r\left(\frac x2\right)=-\frac{\pi}{2^r}\int_0^x t^{r-1}\tan\left(\frac{\pi t}{2}\right)dt.$$
If ${\rm Im}(x)<0,$  then  by changing the variable $t=x\theta~ (0\leq\theta\leq1)$
and using the following formulas (cf. \cite[p. 223]{Ku3} and \cite[p. 849]{KK})
$$
\begin{aligned}
\tan\left(\frac{\pi x\theta}2\right)
&=\frac1i\left(\frac{1-e^{-i\pi x\theta}}{1+e^{-i\pi x\theta}}\right) =-i\left(-1+\frac{2}{1+e^{-i\pi x\theta}}\right) \\
&=i\left(1+2\sum_{n=0}^\infty(-1)^{n-1}e^{-\pi i nx\theta}\right) \\
&=i\left(-1+2\sum_{n=1}^\infty(-1)^{n-1}e^{-\pi i nx\theta}\right)
\end{aligned}
$$
for $\theta>0$ and
$$
\int_0^1\theta^{r-1}e^{\alpha \theta}d\theta
=(-1)^{r-1}(r-1)!\frac{e^\alpha}{\alpha^r}\left(\sum_{k=0}^{r-1}\frac{(-1)^k}{k!}\alpha^k-e^{-\alpha}\right)
$$
for $\alpha\in\mathbb C\setminus\{0\}$ (see \cite[p. 223]{Ku3} and \cite[p. 850]{KK}),
we get
\begin{equation}\label{2.12}
\begin{aligned}
\log\C_r\left(\frac x2\right)
&=-\frac{i\pi x^r}{2^r}\int_0^1 \theta^{r-1} \left(-1+2\sum_{n=1}^\infty(-1)^{n-1}e^{-\pi i nx\theta}\right) d\theta \\
&=\frac{i\pi x^r}{r2^r}-\frac{i\pi x^r}{2^{r-1}}\sum_{n=1}^\infty(-1)^{n-1}\int_0^1\theta^{r-1}e^{-\pi inx\theta}d\theta \\
&=-\frac{(r-1)!}{(2\pi i)^{r-1}}\sum_{k=0}^{r-1}\frac{(\pi i)^k}{k!}{\rm Li}_{r-k}(-e^{-\pi ix})x^k
+\frac{\pi i}{r2^r}x^r \\
&\qquad\qquad-\frac{(r-1)!}{(2\pi i)^{r-1}}\zeta_E(r),
\end{aligned}
\end{equation}
and (1) is proved.

(2) When ${\rm Im}(x)>0,$ the proof for (2) similar. 

(3) and (4) From (\ref{2.12}) we have
 $$
\begin{aligned}
\log\C_r\left(\frac x2\right)
&=-\frac{(r-1)!}{(2\pi i)^{r-1}}\sum_{k=0}^{r-2}\frac{(\pi i)^k}{k!}{\rm Li}_{r-k}(-e^{-\pi ix})x^k \\
&\quad-\left(\frac{x}{2}\right)^{r-1}{\rm Li}_{1}(-e^{-\pi ix})+\frac{\pi i}{r2^r}x^r-\frac{(r-1)!}{(2\pi i)^{r-1}}\zeta_E(r).
\end{aligned}
$$
Since
$${\rm Li}_{1}(-e^{-\pi ix})=-\log(1+e^{-\pi ix})=\log\left(2e^{-\frac{i\pi x}{2}}\cos\frac{\pi x}2\right),$$
we get
$$
\begin{aligned}
\log\C_r\left(\frac x2\right)
&=-\frac{(r-1)!}{(2\pi i)^{r-1}}\sum_{k=0}^{r-2}\frac{(\pi i)^k}{k!}{\rm Li}_{r-k}(-e^{-\pi ix})x^k \\
&\quad-i\pi\left(\frac{x}{2}\right)^{r} +\log\left(2\cos\frac{\pi x}2\right)^{\left(\frac{x}{2}\right)^{r-1}} \\
&\quad+\frac{\pi i}{r2^r}x^r-\frac{(r-1)!}{(2\pi i)^{r-1}}\zeta_E(r).
\end{aligned}
$$
Then by taking the exponential on the both sides of the above equation, we get
\begin{equation}\label{Cr(x)}
\begin{aligned}
\C_r\left(\frac x{2}\right)&=\left(2\cos\frac{\pi x}{2}\right)^{\left(\frac x2\right)^{r-1}}
\exp\left(-\frac{(r-1)!}{(2\pi i)^{r-1}}\sum_{k=0}^{r-2}\frac{(\pi i x)^k}{k!}{\rm Li}_{r-k}(-e^{-\pi ix})
\right. \\
&\quad \left.-i\pi\left(\frac x2\right)^r+\frac{\pi i}{r}\left(\frac x2\right)^{r}-\frac{(r-1)!}{(2\pi i)^{r-1}}\zeta_E(r)\right).
\end{aligned}
\end{equation}
Finally, by taking the real part in the above expression for $2\leq r\in2\mathbb Z$ and  $3\leq r\in1+2\mathbb Z$ repectively,
also notice that (see (\ref{poly-def}))
$${\rm Li}_{r-k}(-e^{-\pi ix})=\sum_{n=1}^{\infty}\frac{(-1)^{n}e^{-\pi i xn}}{n^{r-k}}=\sum_{n=1}^{\infty}(-1)^{n}\frac{\cos(\pi ixn)-i\sin(\pi i xn)}{n^{r-k}},$$
we obtain (3) and (4). 

\section{Miscellaneous results}

In this section, we present several new representations for $\log\C_r(x)$ and some series involving $\lambda(2k)$, the special values
of Dirichlet's lambda function at positive even integer arguments. 

Let $\Cl_2(\theta)$ be the Clausen function defined by
\begin{equation}\label{cla-def}
\Cl_2(\theta)=\sum_{n=1}^\infty\frac{\sin(n\theta)}{n^2}.
\end{equation}
The Clausen function $\Cl_2$ is related to the following expression (see for instance \cite[p. 106, (2)]{SC})
\begin{equation}\label{cla-poly}
\Cl_2(\theta)=\theta\log\pi -\theta\log\left(\sin\frac\theta2\right)+2\pi
\log\frac{G \left(1-\frac\theta{2\pi}\right)}{G \left(1+\frac\theta{2\pi}\right)},
\end{equation}
where $G(x)$ is the Barnes $G$-function.
From Corollary \ref{integ-poly-ex} and (\ref{cla-poly}), we obtain
\begin{equation}\label{C2-Cl}
\begin{aligned}
\C_2\left(\frac x2\right)&=\left(2\cos\frac{\pi x}{2}\right)^{\frac x2}
\exp\left(\frac1{2\pi}\sum_{n=1}^\infty \frac{\sin(\pi n(x+1))}{n^{2}}\right) \\
&=\left(2\cos\frac{\pi x}{2}\right)^{\frac x2}
\exp\left(\frac1{2\pi}\Cl_2(\pi(x+1))\right)
\end{aligned}
\end{equation}
since $(-1)^n\sin(\pi nx)=\sin(\pi n(x+1))$ for $n=1,2,3,\ldots.$
Taking logarithm on the both sides of (\ref{C2-Cl}) and using (\ref{cla-poly}) with $\theta=\pi(x+1),$ we get
\begin{equation}\label{log-C2-Cl}
\log\C_2\left(\frac x2\right)
=\frac x2 \log(2\pi)+\log\sqrt\pi-\frac12\log\left(\cos\frac{\pi x}{2}\right)
+\log\frac{G \left(\frac12-\frac x{2}\right)}{G \left(\frac32+\frac x2\right)}.
\end{equation}
Then by using  Proposition \ref{pro2} we have
\begin{equation}\label{log-C2-Cl-2}
\int_0^{x}\pi t \tan\left(\frac{\pi t}{2}\right)dt
=-2x\log(2\pi)-2\log\pi+2\log\left(\cos\frac{\pi x}{2}\right)
-4\log\frac{G \left(\frac12-\frac x{2}\right)}{G \left(\frac32+\frac x2\right)}.
\end{equation}
As an application, setting $x=\frac12$ in (\ref{log-C2-Cl}) we get
\begin{equation}\label{log-C2-Cl-ex}
\log\C_2\left(\frac14\right)
=\frac{\log(4\pi)}4 +\log\sqrt\pi
+\log\frac{G \left(\frac14\right)}{G \left(\frac74\right)}.
\end{equation}
By using Corollary \ref{rem-ex}  and (\ref{log-C2-Cl-ex}), we see that
\begin{equation}\label{log-C2-Cl-ex2}
\log\frac{G \left(\frac14\right)}{G \left(\frac74\right)}=-\frac{3\log2}8 -\frac{3\log \pi}4 -\frac G{2\pi}
\end{equation}
which is equivalent to
\begin{equation}\label{log-C2-Cl-ex2-eq}
G \left(\frac74\right)=2^{\frac38} \pi^{\frac34}e^{\frac G{2\pi}}G \left(\frac14\right).
\end{equation}
Then by considering the following expression due to Choi and Srivastava \cite[p. 30, (23)]{SC}:
\begin{equation}\label{CS-1/4}
\log G\left(\frac14\right)=-\frac G{4\pi}-\frac34\log\Gamma\left(\frac14\right)-\frac{9\log A}8+\frac3{32},
\end{equation}
we get
\begin{equation}\label{SK7/4}
G\left(\frac74\right)=2^{\frac38} \pi^{\frac34}e^{\frac G{4\pi}+\frac3{32}}A^{-\frac98}\Gamma\left(\frac14\right)^{-\frac34}.
\end{equation}

In the subsequent, we will show that Corollary \ref{c(1/6)} in fact implies a relation between the special values of Barnes' $G$-function and the Dirichlet $L$-function.
Putting $x=\frac13$ in (\ref{log-C2-Cl}) and by simplifying, we get
\begin{equation}\label{log-C2-Cl-1/3}
\log\C_2\left(\frac16\right)
=\frac {2\log(2\pi)}3 -\frac{\log3}4
+\log\frac{G \left(\frac13\right)}{G \left(\frac53\right)}.
\end{equation}
Then by substituting the following identity (see  Corollary \ref{c(1/6)})
\begin{equation}\label{C2-Cl-1/6}
\C_2\left(\frac16\right)=3^{\frac1{12}}\exp\left(\frac{\sqrt3}{4\pi}\left(\frac14L(2,\chi_3)-L(2,\chi_6)\right)\right)
\end{equation}
into (\ref{log-C2-Cl-1/3}), we obtain
\begin{equation}\label{C2-Cl-1/6-2}
\log\frac{G \left(\frac13\right)}{G \left(\frac53\right)}
=\frac{\log3}3-\frac{2\log(2\pi)}3+\frac{\sqrt3}{4\pi}\left(\frac14L(2,\chi_3)-L(2,\chi_6)\right).
\end{equation}
If going on substituting the following formula (see \cite[p. 16]{Ad})
\begin{equation}\label{G-1/3}
\log G \left(\frac13\right)=\frac{\log3}{72}+\frac\pi{18\sqrt3}-\frac23\log\Gamma \left(\frac13\right)
-\frac{4\log A}3-\frac{\psi^{(1)} \left(\frac13\right)}{12\pi\sqrt{3}}+\frac19
\end{equation}
into (\ref{C2-Cl-1/6-2}),
we further get 
\begin{equation}\label{G-5/3}
\begin{aligned}
\log G \left(\frac53\right)&=-\frac{23\log3}{72}+\frac\pi{18\sqrt3}-\frac23\log\Gamma \left(\frac13\right)
-\frac{4\log A}3-\frac{\psi^{(1)} \left(\frac13\right)}{12\pi\sqrt{3}}  \\
&\quad+\frac19 +\frac{2\log(2\pi)}3
-\frac{\sqrt3}{4\pi}\left(\frac14L(2,\chi_3)-L(2,\chi_6)\right),
\end{aligned}
\end{equation}
where $\psi^{(1)}(x)=\frac{\partial\log\Gamma(x)}{\partial x^2}$ is the polygamma function.

From the definition of the triple cosine function (\ref{mcos-ex}) and the power series expansion
$$\log(1+x)=\sum_{k=1}^\infty(-1)^{k-1}\frac{x^k}{k}$$
for $|x|<1,$
we  may represent $\log\C_3(x)$ as
\begin{equation}\label{mcos-3}
\begin{aligned}
\log\C_3(x)&=\sum_{n=1}^\infty \left(\left(\frac{2n-1}{2}\right)^2 \log\left(1-\frac{4x^2}{(2n-1)^2}\right) +x^2\right) \\
&=\sum_{n=1}^\infty \left(-\frac{(2n-1)^2}{4}\sum_{k=1}^\infty\frac{4^k}{(2n-1)^{2k}}\frac{x^{2k}}{k} +x^2\right) \\
&=-\sum_{k=2}^\infty 4^{k-1}\lambda(2k-2)\frac{x^{2k}}{k},
\end{aligned}
\end{equation}
which is equivalent to
\begin{equation}\label{mcos-3-1}
\log\C_3(x)=-\sum_{k=1}^\infty 2^{2k}\lambda(2k)\frac{x^{2k+2}}{k+1}.
\end{equation}
Then by combining (\ref{lam}) and (\ref{mcos-3-1}), and using (531) in \cite[p. 221]{SC}, 
$\log\C_3(x)$  has the following representation
\begin{equation}\label{sc-com}
\begin{aligned}
\log\C_3(x)&=-\log\left(2^{-\frac1{24}}\cdot e^{-\frac18}\cdot A^{\frac32}\right) \\
&\quad-(1-\log(2\pi))\frac{x^2}2-\frac14\log\Gamma\left(\frac12+x\right)\Gamma\left(\frac12-x\right) \\
&\quad-\left(\frac12+x\right)\log G\left(\frac12+x\right)-\left(\frac12-x\right)\log G\left(\frac12-x\right) \\
&\quad+\int_0^x\log G\left(t+\frac12\right)dt+\int_0^{-x}\log G\left(t+\frac12\right)dt
\end{aligned}
\end{equation}
for $|x|<\frac12,$
where $A$ is the Glaisher-Kinkelin constant 
(also see \cite[p. 25]{SC}).

(\ref{mcos-3-1}) can be generalized from $r=3$ to $r=2,3,4,\ldots$ as follows.
Recall that (see Proposition \ref{pro1})
\begin{equation}\label{logc3}
\log\C_r(x)=-\int_0^x\pi t^{r-1}\tan(\pi t)dt
\end{equation}
and notice the following well-known identity
\begin{equation}\label{well}
t\tan (t)=\sum_{k=1}^\infty\frac{(-1)^{k-1}2^{2k}(2^{2k}-1)B_{2k}}{(2k)!}t^{2k}
\end{equation}
for $|t|<\frac{\pi}2.$ By combining (\ref{well}) with (\ref{zeta-even}), and setting $t\to\pi t,$ we have
\begin{equation}\label{tan-3}
\pi t\tan(\pi t)=2\sum_{k=1}^\infty2^{2k}\lambda(2k)t^{2k}
\end{equation}
for $|t|<\frac12,$ where we have used the relation $\lambda(s)=(1-2^{-s})\zeta(s)$ (see (\ref{lam})).
Then for $r=2,3,4,\ldots,$ by multiplying (\ref{tan-3}) with $t^{r-2}$ and integrating the result equation, from (\ref{logc3}) we get
\begin{equation}\label{mcos-3-r}
\log\C_r(x)=-2\sum_{k=1}^\infty 2^{2k}\lambda(2k)\frac{x^{2k+r-1}}{2k+r-1}.
\end{equation}
And this representation is valid for $|x|<\frac12.$

Setting $x=\frac14$ in (\ref{mcos-3-r}), and using Corollary \ref{rem-ex} for $r=2,3,4$ and 5, it is readily to obtain 
$$
\begin{aligned}
&\sum_{k=1}^\infty\frac{\lambda(2k)}{(2k+1)2^{2k}}=-\frac{\log2}{4}+\frac{G}{\pi}, \quad\text{(see \cite[p. 241, (666)]{SC})} 
\\
&\sum_{k=1}^\infty\frac{\lambda(2k)}{(2k+2)2^{2k}}=-\frac{\log2}{4}+\frac{2G}{\pi}-\frac{7\zeta_E(3)}{2\pi^2},  
\\
&\sum_{k=1}^\infty\frac{\lambda(2k)}{(2k+3)2^{2k}}=-\frac{\log2}{4} +\frac{3G}{\pi}+\frac{3\zeta_E(3)}{2\pi^2}
-\frac{24\beta(4)}{\pi^3}, 
\\
&\sum_{k=1}^\infty\frac{\lambda(2k)}{(2k+4)2^{2k}}=-\frac{\log2}{4} +\frac{4G}{\pi}+\frac{3\zeta_E(3)}{\pi^2}
-\frac{96\beta(4)}{\pi^3}+\frac{186\zeta_E(5)}{\pi^4}. \label{l-r=5}
\end{aligned}
$$
Furthermore, by subtracting the above series we get
$$
\begin{aligned}
&\sum_{k=1}^\infty\frac{\lambda(2k)}{(2k+1)(2k+2)2^{2k}}=-\frac{G}{\pi}+ \frac{7\zeta_E(3)}{2\pi^2}, 
\\
&\sum_{k=1}^\infty\frac{\lambda(2k)}{(2k+2)(2k+3)2^{2k}}=-\frac{G}{\pi}-\frac{5\zeta_E(3)}{\pi^2}+\frac{24\beta(4)}{\pi^3}, 
\\
&\sum_{k=1}^\infty\frac{\lambda(2k)}{(2k+3)(2k+4)2^{2k}}=-\frac{G}{\pi}-\frac{3\zeta_E(3)}{2\pi^2}
+\frac{72\beta(4)}{\pi^3}-\frac{186\zeta_E(5)}{\pi^4}. \label{l-r=4}
\end{aligned}
$$
From this, we have the following series representation for $\zeta_E(3)$:
$$\zeta_E(3)=\frac{2\pi^2}{7}\left(\frac G\pi+\sum_{k=1}^\infty\frac{\lambda(2k)}{(2k+1)(2k+2)2^{2k}}\right).$$

\section*{Acknowledgement}  We are grateful to Professor Jean-Paul Allouche for his interested in this work and for his many helpful
comments and suggestions.

\end{document}